 \def\NN{{\mathbb N}} 
 \def\RR{{\mathbb R}} 
\def\TT{{\mathbb T}}
   \def\cM{{\cal M}} 
    \def\cU{{\cal U}}
\def\cE{{\cal E}}    
\def\cF{{\cal F}}  \def\cL{{\cal L}}
\newtheorem{theoremalph}{Theorem}
\newtheorem{thm}{Theorem}[section]
\newtheorem{lem}[thm]{Lemma}
\newtheorem{pro}[thm]{Proposition}
\newtheorem{conj}[thm]{Conjecture}
\newtheorem{rem}[thm]{Remark}
\newtheorem*{Claim}{Claim}
\newtheorem*{Acknowledgements}{Acknowledgements}
\numberwithin{equation}{section}
\begin{document}

\title{Lyapunov optimizing measures and periodic measures for $C^2$ expanding maps}

\author{Wen Huang\footnote{W.H. was partially supported by NSFC (12090012, 12031019, 11731003)} \and Leiye Xu\footnote{L.X. was partially supported by NSFC (12031019, 11801538,
11871188)} \and Dawei Yang\footnote{D.Y.  was partially supported by NSFC (11822109, 11790274).}}

\maketitle


\begin{abstract}
We prove that there exists an open and dense subset $\mathcal{U}$ in the space of $C^{2}$ expanding self-maps of the circle $\mathbb{T}$ such that the Lyapunov minimizing measures of any $T\in\cU$ are uniquely supported on a periodic orbit.

This answers a conjecture of Jenkinson-Morris in the $C^2$ topology.
\end{abstract}
\maketitle
	\section{Introduction}
	\subsection{Main theorems}
The ergodic optimization problem has connections with Lagrangian Mechanics, Thermodynamical Formalism, Multifractal Ananlysis, and Control Theory (see \cite{J2019}).
In the generic chaotic setting, it has been conjectured by Yuan and Hunt \cite{YuH99} that for an Axiom A or uniformly expanding system $T$ and a (topologically) generic smooth function $f$, there exists an optimal periodic orbit.
Contreras \cite{C} has made substantial contributions to Yuan-Hunt's conjecture. Later on,  the papers \cite{HLMXZ,HLMXZ2,MSV19} progressed a lot in this direction.

In Yuan-Hunt's conjecture, the function $f$ is not strongly related to the system $T$.
The aim of this paper is to consider the optimal measures of some quantities very related to the dynamical system. One of the most interesting quantities may be the \emph{Lyapunov exponent}. The measures optimize Lyapunov exponents are said to be \emph{Lyapunov optimal measures}. This notion was given by Contreras-Lopes-Thieullen \cite{CLT}. We will show that the Lyapunov minimizing/maximizing measures of generic 1-dimensional expanding self-maps are supported on periodic orbits for the $C^2$ topology.

Let $\TT=\mathbb{R}/\mathbb{Z}$ be the circle and $T: \mathbb{T}\rightarrow \mathbb{T}$ be a $C^1$ self-map. Let $\mathcal{M}_{inv}(T)$  (resp. $\mathcal{M}_{erg}(T)$) be the set of $T$-invariant (resp. $T$-ergodic) Borel probability measures. For any $\mu\in  \mathcal{M}_{inv}(T)$, define its \emph{Lyapunov
exponent} as
$$\lambda_T(\mu) :=\int \log |DT| {d}\mu.$$
Lyapunov exponents are vary important dynamical quantities. We are interested in seeking which measures minimize or maximize the Lyapunov exponents. Define
$$\alpha(T):=\inf_{\nu\in \cM_{inv}(T)} \lambda_T(\nu),~~~\beta(T):=\sup_{\nu\in \cM_{inv}(T)} \lambda_T(\nu).$$
An invariant measure $\mu$ is said to be a \emph{Lyapunov minimizing measure} if $\alpha(T)=\lambda_T(\mu)$; it is said to be a \emph{Lyapunov maximizing measure} if $\beta(T)=\lambda_T(\mu)$.

\medskip

Lyapunov minimizing/maximizing measures may be very difficult to describe for any self-map. However, it was imagined for these measures are periodic for generic expanding self-maps. A self-map $T:\TT\to\TT$ is \emph{expanding} if there are $C>0$ and $\lambda>1$ such that $\|DT^n(x)\|\ge C\lambda^n$ for any $x\in\TT$.

For any two self-maps $S$ and $T$, the $C^k$-distance between $S$ and $T$ is defined to be
$$d_{C^k}(S,T)=\sum_{i=0}^k d_{C^0}(D^iS,D^iT).$$
Given $\chi\in(0,1]$, the $C^{k,\chi}$-distance between $S$ and $T$ is defined to be
$$d_{C^{k,\chi}}(S,T)=d_{C^k}(S,T)+\sup_{x\neq y}\frac{d_{C^0}(D^kS,D^kT)}{|x-y|^{\chi}}.$$

Let $\cE^{k}(\TT)$ ($\cE^{k,\chi}(\TT)$) be the space of $C^k$ ($C^{k,\chi}$) expanding self-maps endowed with the $C^k$-distance ($C^{k,\chi}$-distance).


\begin{theoremalph}\label{Thm:main-lyapunov}
There is a dense open set ${\mathcal U}\subset\cE^{2}(\TT)$ such that the Lyapunov minimizing measure of $T\in\cU$ is unique and supported on a periodic orbit.
\end{theoremalph}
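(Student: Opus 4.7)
Define $\cU\subset\cE^{2}(\TT)$ to be the set of $T$ whose Lyapunov minimizing measure is unique and supported on a periodic orbit; the plan is to prove openness and density of $\cU$ separately.

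\emph{Openness.} Suppose $T_0\in\cU$ with unique minimizer $\mu_0$ on a periodic orbit $\mathcal{O}_0$. Upper semicontinuity of $\alpha$, continuity of $T\mapsto\log|DT|$ in $C^0$, and uniqueness of $\mu_0$ together force any weak-$*$ accumulation point of a sequence of minimizers $\mu_{T_n}$ with $T_n\to T_0$ in $C^2$ to coincide with $\mu_0$. Structural stability of expanding maps provides, for $T$ near $T_0$, a H\"older conjugacy $h_T$ close to the identity, carrying $\mathcal{O}_0$ to a nearby periodic orbit $\mathcal{O}_T$; hence $\lambda_T(\mu_{\mathcal{O}_T})\to\alpha(T_0)$. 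The remaining step is to identify $\mu_T$ with $\mu_{\mathcal{O}_T}$: here one invokes the Ma\~n\'e--Conze--Guivarc'h subaction $u_0$ of $T_0$ (H\"older because $\log|DT_0|$ is Lipschitz) as an approximate subaction for $T$, with coboundary error of order $d_{C^2}(T,T_0)^{\gamma}$. Combining the near-zero structure of the corrected potential with the local hyperbolicity of expanding maps forces the ergodic components of $\mu_T$ to be concentrated on $T$-periodic orbits shadowing $\mathcal{O}_0$, which by structural stability must be $\mathcal{O}_T$. Hence $\mu_T=\mu_{\mathcal{O}_T}$ is the unique minimizer and $\cU$ is open.

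\emph{Density.} Fix $T_0\in\cE^{2}(\TT)$ and $\varepsilon>0$. The Ergodic Closing Lemma (or specification for expanding maps) applied to an ergodic $T_0$-minimizer yields a periodic orbit $\mathcal{O}$ with $\lambda_{T_0}(\mu_{\mathcal{O}})$ arbitrarily close to $\alpha(T_0)$. I would then construct a $C^2$-small perturbation $T$ of $T_0$, supported in a disjoint union of small neighborhoods of the points of $\mathcal{O}$, that decreases $|DT|$ on $\mathcal{O}$ by a controlled amount $\eta>0$ while remaining expanding. This drops $\lambda_T(\mu_{\mathcal{O}})$ below $\alpha(T_0)$, making $\mu_{\mathcal{O}}$ a strong candidate minimizer. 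The crucial step is to verify, using bounded distortion from the $C^2$ hypothesis and careful sizing of the perturbation supports, that no other $T$-invariant measure has a smaller Lyapunov exponent. If this is not immediately achieved, a finite iteration combined with the openness step above produces a map in $\cU$ within $C^2$-distance $\varepsilon$ of $T_0$.

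\emph{Main obstacle.} The principal difficulty lies in the density step, specifically in ruling out competing minimizers after the perturbation. Because the potential $\log|DT|$ is intrinsically coupled to the map, every $C^2$-perturbation simultaneously alters both the dynamics and the potential, so the standard ergodic-optimization trick of varying the potential while holding the dynamics fixed is unavailable. The $C^2$ regularity hypothesis is essential here: it yields bounded distortion for inverse branches and the Lipschitz regularity of $\log|DT|$ needed to apply Ma\~n\'e--Conze--Guivarc'h subactions and Livsic-type rigidity to both $T_0$ and its perturbation $T$ simultaneously. The technical heart will be a quantitative statement to the effect that any ergodic $T$-measure with Lyapunov exponent close to $\lambda_T(\mu_{\mathcal{O}})$ must be weak-$*$ close to $\mu_{\mathcal{O}}$, so that the perturbation's effect on it is controlled and the spectral gap is preserved.
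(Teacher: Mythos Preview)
Your proposal has the right instincts but contains a genuine gap in the density step, and the paper's architecture is different from your openness/density split.

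\textbf{The missing ingredient.} In your density argument you produce a periodic orbit $\mathcal{O}$ via specification with $\lambda_{T_0}(\mu_{\mathcal{O}})$ close to $\alpha(T_0)$, then perturb $|DT|$ down on $\mathcal{O}$. You correctly flag that the hard part is excluding competing minimizers, but you offer no mechanism for doing so; the fallback ``finite iteration combined with the openness step'' is circular, since your openness argument already presupposes a map in $\cU$. The paper's decisive tool here is Proposition~2.4 (a special case of \cite[Proposition~3.1]{HLMXZ}): given any compact invariant set $E$ and any constant $C>0$, there is a periodic orbit $\Gamma$ with
\[
G(\Gamma)\;>\;C\cdot\sum_{x\in\Gamma}d(x,E),
\]
where $G(\Gamma)$ is the minimal gap between points of $\Gamma$. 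Applying this with $E=\operatorname{supp}(\mu_T)$ for a chosen minimizer $\mu_T$ and $C=C_\varepsilon$ enormous gives a periodic orbit whose points are very well separated compared to their distance from the minimizing set. This is exactly what allows a $C^{1,1}$-small bump perturbation supported in $\rho_\varepsilon G(\Gamma)$-neighborhoods of the points of $\Gamma$ to lower $F_S$ on $\Gamma$ by a definite amount $\varepsilon\rho_\varepsilon G^*/K^4$ while leaving $F_S$ essentially untouched on $\mathcal{F}_T=\{d(\cdot,\Gamma)>\rho_\varepsilon G^*\}$. The subsequent orbit-segment argument (Lemma~2.9 in the paper) then shows that for any ergodic $\mu\neq\delta_{\Gamma_S}$ one has $\int\widetilde F_S\,d\mu>0$. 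A periodic orbit obtained merely from specification gives no control on $G(\Gamma)$ and hence no way to localize the perturbation; your bounded-distortion remarks do not substitute for this.

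\textbf{Architecture.} The paper does \emph{not} prove openness and density separately. Instead it proves a single perturbative statement (Theorem~2.6): for any $C^{1,1}$ expanding $T$ and any $\varepsilon>0$ there is an explicit $S_0=T+h$ with $\|h\|_{C^{1,1}}<\varepsilon/2$ and an entire $C^{1,1}$-neighborhood $\mathcal U_{\varepsilon,T}$ of $S_0$ on which the Lyapunov minimizer is uniquely $\delta_{\Gamma_S}$. The robustness is built into the construction via the constants $K,L_\varepsilon,\rho_\varepsilon,C_\varepsilon$ and the conjugacy estimate from structural stability; it is not deduced from an abstract locking principle. The passage to $C^2$ (Theorem~A) is then a one-line mollification: replace $h$ by $h_\delta(x)=\frac{1}{2\delta}\int_{-\delta}^{\delta}h(x+s)\,ds$, observe $\|D^2h_\delta\|_{C^0}\le\operatorname{Lip}(Dh)<\varepsilon/2$, and use that a $C^2$-neighborhood of $T+h_\delta$ sits inside the already-established $C^{1,1}$-neighborhood of $S_0$.

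Your openness sketch via the Ma\~n\'e subaction of $T_0$ as an approximate subaction for $T$ is in the right spirit, but the sentence ``forces the ergodic components of $\mu_T$ to be concentrated on $T$-periodic orbits shadowing $\mathcal{O}_0$'' is precisely where all the quantitative work lies, and that work requires the gap control from Proposition~2.4 rather than generic Livsic-type rigidity.
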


The proof of Theorem~\ref{Thm:main-lyapunov} is mainly based on a Lipschitz-$C^1$ version.

\begin{theoremalph}\label{Thm:main-lyapunov-Holder}
 There is a dense open set ${\mathcal U}\subset\cE^{1,1}(\TT)$ such that the Lyapunov minimizing measure of $T\in\cU$ is unique and supported on a periodic orbit.
\end{theoremalph}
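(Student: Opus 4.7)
The plan is to establish Theorem~\ref{Thm:main-lyapunov-Holder} by the familiar density-plus-openness scheme from ergodic optimization, adapted to the present setting where the observable $\phi_T := \log|DT|$ is coupled to the dynamics itself. First I would set up the variational machinery. For every $T \in \cE^{1,1}(\TT)$, the potential $\phi_T$ is Lipschitz, since $DT$ is Lipschitz and bounded away from zero by expansion. A Ma\~n\'e--Conze--Guivarc'h lemma in the Lipschitz category then yields a Lipschitz sub-action $u_T:\TT\to\RR$ satisfying
$$\phi_T(x) - \alpha(T) \geq u_T(Tx) - u_T(x), \qquad \forall x \in \TT,$$
with equality along orbits in the support of any Lyapunov-minimizing measure. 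The closed $T$-invariant Aubry set $\cA(T)$ on which equality holds contains the supports of all such measures; thus to know the minimizers, it suffices to understand $\cA(T)$.

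For density, given $T_0 \in \cE^{1,1}(\TT)$ and $\varepsilon>0$, I would construct $T$ with $d_{C^{1,1}}(T,T_0) < \varepsilon$ whose Lyapunov-minimizing measure is uniquely supported on a periodic orbit. By the classical approximation of invariant measures by periodic ones for expanding maps, choose a periodic orbit $\cO = \{x_0,\dots,x_{p-1}\}$ of $T_0$ whose Birkhoff average $\frac{1}{p}\sum_i \log|DT_0(x_i)|$ approximates $\alpha(T_0)$ to within $\varepsilon$. Next, construct a localized $C^{1,1}$-small perturbation $T$ supported in pairwise disjoint small neighborhoods $U_i\ni x_i$, by subtracting from $DT_0$ a nonnegative Lipschitz bump peaked at each $x_i$ and integrating to recover $T$. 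This decreases the Birkhoff average along $\cO$ by a controlled positive amount, while any $T$-invariant measure not carrying near-full mass on $\bigcup_i U_i$ sees its Birkhoff average decrease by strictly less. Combined with an exponential shadowing argument identifying any ergodic component supported essentially in $\bigcup_i U_i$ with the uniform measure on $\cO$, this creates a strict spectral gap between $\cO$ and all other invariant measures.

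Openness of the resulting ``locked'' set $\cU$ would follow from continuous dependence of the hyperbolic periodic orbit $\cO_T$ on $T$ in the $C^{1,1}$ topology, upper semicontinuity of $\lambda_T(\mu)$ jointly in $T$ and in the weak-$*$ topology on invariant measures, and weak-$*$ compactness of $\cM_{inv}(T)$. These ensure that a $C^{1,1}$-small perturbation can neither destroy the periodic orbit nor erode the Birkhoff-average gap.

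The main obstacle I expect is the quantitative perturbation step: rigorously verifying that a localized Lipschitz modification of $DT_0$, small in $C^{1,1}$, actually shrinks $\cA(T)$ to the single orbit $\cO$ and creates a strict gap. The difficulty is twofold. On one hand, $\cE^{1,1}$ is at the minimal regularity at which the Ma\~n\'e sub-action can be taken Lipschitz, leaving little slack for transversality-type arguments. On the other hand, one must exclude non-periodic $T$-invariant measures concentrated near, but not equal to, $\delta_\cO$ from matching the new minimum average; controlling these requires a careful coupling of an exponential shadowing estimate with a stability bound for the sub-action $u_T$ under perturbation of the dynamics. Overcoming this obstacle --- quantifying how the Aubry set responds to a dynamics-coupled Lipschitz perturbation of $\phi_T$ --- is the technical heart of the argument.
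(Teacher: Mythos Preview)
Your outline has the right architecture---Ma\~n\'e sub-action, localized perturbation near a periodic orbit, shadowing to exclude nearby non-periodic measures---and this matches the paper.  But the step where you select the periodic orbit is too weak to close the argument.  You propose to pick $\cO$ so that its Birkhoff average is within $\varepsilon$ of $\alpha(T_0)$; the paper instead invokes a result of Huang--Lian--Ma--Xu--Zhang (Proposition~\ref{prop-2} here): for \emph{any} constant $C>0$ there is a periodic orbit $\Gamma$ with
\[
G(\Gamma) \;>\; C\cdot \sum_{x\in\Gamma} d\bigl(x,\operatorname{supp}(\mu_T)\bigr),
\]
where $G(\Gamma)$ is the minimal spacing between points of $\Gamma$.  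This dual control---inner gap large \emph{relative to} total distance from the Aubry set---is what makes the quantitative perturbation go through: the bump radius is $\rho_\varepsilon G(\Gamma)$, the gain per point is of order $\varepsilon\rho_\varepsilon G(\Gamma)$, and the defect to beat is of order $K\cdot d_*$; only the inequality $G(\Gamma)>C_\varepsilon d_*$ with $C_\varepsilon$ chosen after $\varepsilon,\rho_\varepsilon,K$ forces the right sign in the key estimate (Lemma~\ref{Lem:sum-see-positivity}).  A periodic orbit that merely has small Birkhoff defect gives no control on $G(\Gamma)$, so your bumps need not be disjoint and their $C^{1,1}$ size need not be compatible with the required gain.

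Two further points of divergence.  First, you flag ``stability of the sub-action under perturbation of the dynamics'' as an obstacle; the paper sidesteps this entirely by fixing the sub-action $f$ obtained for the original $T$ and using it unchanged to define $F_S(x)=f(S(x))-f(x)+\log\|DS(x)\|$ for every nearby $S$.  No new sub-action is ever computed.  Second, openness is not obtained via abstract semicontinuity of $\lambda_T(\mu)$; it falls out of the same estimate, because all bounds are uniform over a $C^{1,1}$-ball around the perturbed map $S_0$ (using structural stability, Theorem~\ref{Thm:conjugacy}, to track $\Gamma_S$).  So the technical heart is not where you placed it: the decisive idea is the choice of $\Gamma$ via Proposition~\ref{prop-2}.
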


One can also get a H\"older-$C^1$ version of Theorem~\ref{Thm:main-lyapunov-Holder}.

\begin{theoremalph}\label{Thm:main-lyapunov-real-Holder}
Assume that $\chi\in(0,1]$. There is a dense open set ${\mathcal U}\subset\cE^{1,\chi}(\TT)$ such that the Lyapunov minimizing measure of $T\in\cU$ is unique and supported on a periodic orbit.
\end{theoremalph}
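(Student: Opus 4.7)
I would adapt the proof of Theorem~\ref{Thm:main-lyapunov-Holder} to the $C^{1,\chi}$ setting rather than try to deduce Theorem~\ref{Thm:main-lyapunov-real-Holder} as a formal corollary: for $\chi<1$ the inclusion $\cE^{1,1}(\TT)\hookrightarrow\cE^{1,\chi}(\TT)$ is not dense in the $C^{1,\chi}$-topology (Lipschitz functions fail to approximate generic $\chi$-H\"older functions in the H\"older seminorm, as illustrated by $x\mapsto|x|^{\chi}$), so a direct reduction is unavailable. Define
\[
\mathcal{U}:=\bigl\{T\in\cE^{1,\chi}(\TT):\,T\text{ has a unique Lyapunov minimizing measure, supported on a periodic orbit}\bigr\},
\]
and verify separately that $\mathcal{U}$ is open and that $\mathcal{U}$ is dense in $\cE^{1,\chi}(\TT)$.

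Openness follows from a Ma\~n\'e-type subaction argument essentially identical to the one used in Theorem~\ref{Thm:main-lyapunov-Holder}. Given $T_0\in\mathcal{U}$ with minimizing periodic orbit $\mathcal{O}_0$, I would produce a continuous function $u:\TT\to\RR$ with
\[
\log|DT_0|-\alpha(T_0)\geq u\circ T_0-u,
\]
equality exactly on $\mathcal{O}_0$, and quantitatively strict inequality off every neighbourhood of $\mathcal{O}_0$. The map $T\mapsto\log|DT|$ is continuous $\cE^{1,\chi}(\TT)\to C^0(\TT)$, and the hyperbolic orbit $\mathcal{O}_0$ admits a unique continuation $\mathcal{O}_T$ for $T$ close to $T_0$ in $C^1$; these two facts together with the strict subaction inequality force every Lyapunov minimizing measure of $T$ to be supported on $\mathcal{O}_T$. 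Since the argument requires only $C^0$-smallness of $\log|DT|-\log|DT_0|$, it transfers automatically from the $C^{1,1}$-topology to the finer $C^{1,\chi}$-topology.

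Density is the main work. Given $T\in\cE^{1,\chi}(\TT)$ and $\varepsilon>0$, I would first use weak-$\ast$ density of periodic measures in $\mathcal{M}_{inv}(T)$ to select a periodic point $p$ of period $n$ with $\lambda_T(\mu_p)-\alpha(T)$ arbitrarily small. Then I would define $\tilde T$ by multiplying $DT$ by a factor $1-\rho$, where $\rho$ is a coordinated family of nonnegative bumps---one of amplitude $\eta$ and support-radius $\delta$ near each of the $n$ points of the orbit of $p$. Taking $\eta$ slightly larger than $\lambda_T(\mu_p)-\alpha(T)$ forces $\lambda_{\tilde T}(\mu_p)<\alpha(T)$, and openness then makes $\mu_p$ the unique minimizer for $\tilde T$. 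The main obstacle is the $C^{1,\chi}$-accounting of $\rho$: each bump contributes $\asymp\eta\delta^{-1-\chi}$ to the $C^{0,\chi}$-seminorm of $D\tilde T$, so the triple $(\eta,\delta,n)$ must be balanced, using the expansion rate ($\delta\lesssim\lambda^{-n}$ guarantees pairwise disjointness) and the convergence rate of $\lambda_T(\mu_p)\to\alpha(T)$ along well-chosen long orbits, so that $\eta\gtrsim\lambda_T(\mu_p)-\alpha(T)$ can be achieved while the H\"older cost stays below $\varepsilon$. Carrying out this trade-off with the exponent $1+\chi$ in place of the $2$ of the Lipschitz case is the technical heart of the proof.
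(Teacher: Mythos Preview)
Your density argument has a genuine gap. You write that ``taking $\eta$ slightly larger than $\lambda_T(\mu_p)-\alpha(T)$ forces $\lambda_{\tilde T}(\mu_p)<\alpha(T)$, and openness then makes $\mu_p$ the unique minimizer for $\tilde T$.'' But the inequality $\lambda_{\tilde T}(\mu_p)<\alpha(T)$ compares the $\tilde T$-exponent of $\mu_p$ with the infimum for the \emph{unperturbed} map $T$; it says nothing about $\alpha(\tilde T)$. The bumps you insert lower $\log|D\tilde T|$ on a neighbourhood of the orbit, and any other invariant measure whose support passes through that neighbourhood has its Lyapunov exponent lowered too. In particular, an original $T$-minimizer $\nu$ with support threading the bump regions could satisfy $\lambda_{\tilde T}(\nu)<\lambda_{\tilde T}(\mu_p)$. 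Invoking ``openness'' here is circular: openness of $\mathcal U$ is a statement about maps already known to lie in $\mathcal U$, and you have not yet shown $\tilde T\in\mathcal U$. Your trade-off among $(\eta,\delta,n)$ addresses only the size of the perturbation in $C^{1,\chi}$, not this competition among measures.

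The paper's route (for $\chi=1$, and by the same outline for general $\chi$) supplies exactly the missing mechanism. It does not separate openness and density; instead Theorem~\ref{Thm:perturbative-extended} constructs, in one stroke, an open $C^{1,1}$-ball on which the minimizer is periodic. Two ingredients replace your weak-$\ast$ selection of a periodic orbit. First, Ma\~n\'e's Lemma (Lemma~\ref{lem-Mane}) produces a H\"older subaction $f$ so that $F_T=\log|DT|+f-f\circ T$ is bounded below by $\alpha(T)$, with equality on $\mathrm{supp}(\mu_T)$; this converts the problem into showing $\int(F_S-A_{\Gamma_S})\,d\nu>0$ for every ergodic $\nu\neq\delta_{\Gamma_S}$. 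Second---and this is the point your argument lacks---Proposition~\ref{prop-2} selects a periodic orbit $\Gamma_T$ with the \emph{gap property} $G(\Gamma_T)>C_\varepsilon\sum_{x\in\Gamma_T}d(x,\mathrm{supp}(\mu_T))$ for arbitrarily large $C_\varepsilon$. The bump radius is then taken proportional to $G(\Gamma_T)$, which simultaneously (i) keeps the bumps disjoint, (ii) makes the $C^{1,\chi}$-cost small, and (iii) ensures, via the H\"older estimate on $F_T$ and the smallness of $\sum d(x,\mathrm{supp}(\mu_T))$, that the drop in $F$ along $\Gamma_T$ dominates. The orbit-segment analysis of Lemma~\ref{Lem:sum-see-positivity} then shows that any competing ergodic measure spends a definite fraction of time in the region $\mathcal F_T$ where $\widetilde F_S$ is strictly positive, and that this positive contribution beats the possibly negative contribution accumulated while the orbit shadows $\Gamma_S$. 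Merely choosing a periodic orbit with $\lambda_T(\mu_p)$ close to $\alpha(T)$, as you do, gives no control over $G(\Gamma)$ versus $d(\Gamma,\mathrm{supp}(\mu_T))$ and so cannot drive this comparison.
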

Since the proof of Theorem~\ref{Thm:main-lyapunov-real-Holder} follows almost the same line of the proof of Theorem~\ref{Thm:main-lyapunov-Holder}, it is omitted.

\medskip

Theorem~\ref{Thm:main-lyapunov} answers a conjecture of Jenkinson-Morris \cite[Conjecture 1]{JM08} positively in the $C^2$ topology.
\begin{conj} \cite{JM08}\label{con1} For integer $k\ge 2$, a generic  $T\in \mathcal{E}^k$ has a unique Lyapunov minimizing
measure, and this measure is supported on a periodic orbit of $T$.
\end{conj}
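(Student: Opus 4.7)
\textbf{Proof proposal for Conjecture~\ref{con1}.}

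The plan is to deduce the conjecture from Theorem~\ref{Thm:main-lyapunov} by transferring its conclusion from $\cE^{2}(\TT)$ with the $C^{2}$-topology to $\cE^{k}(\TT)$ with the $C^{k}$-topology, for every integer $k\ge 2$. For $k=2$ Theorem~\ref{Thm:main-lyapunov} already implies the conjecture, since a dense open set is automatically residual. For $k>2$, let $\cU\subset\cE^{2}(\TT)$ be the dense open set from Theorem~\ref{Thm:main-lyapunov}, and set $\cU_{k}:=\cU\cap\cE^{k}(\TT)$. The aim is to show that $\cU_{k}$ is $C^{k}$-open and $C^{k}$-dense in $\cE^{k}(\TT)$, which is strictly stronger than the claim that it is residual.

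Openness is for free: since $d_{C^{2}}\le d_{C^{k}}$, the inclusion $\cE^{k}(\TT)\hookrightarrow \cE^{2}(\TT)$ is continuous, so $\cU_{k}$ is $C^{k}$-open in $\cE^{k}(\TT)$, and every $T\in\cU_{k}$ has by construction a unique Lyapunov minimizing measure supported on a periodic orbit. For density, fix $T\in\cE^{k}(\TT)$ and $\varepsilon>0$. First, by convolution with a mollifier on $\TT$, approximate $T$ by a $C^{\infty}$ expanding map $T_{0}$ with $d_{C^{k}}(T,T_{0})<\varepsilon/2$. Next, invoke the constructive perturbation scheme underlying the proof of Theorem~\ref{Thm:main-lyapunov}: it produces a one-parameter family $T_{\delta}=T_{0}+\delta\varphi$, with $\varphi$ a smooth bump localized near a suitably chosen periodic orbit, such that $T_{\delta}\in\cU$ for every $\delta$ in some interval $(0,\delta_{0})$. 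Choosing $\delta\in(0,\delta_{0})$ with $\delta\,\|\varphi\|_{C^{k}}<\varepsilon/2$ gives $T':=T_{\delta}\in\cU_{k}$ with $d_{C^{k}}(T,T')<\varepsilon$.

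The main obstacle is to confirm that the perturbations constructed in the proof of Theorem~\ref{Thm:main-lyapunov} can indeed be taken $C^{\infty}$, and that, once the qualitative mechanism has been fixed (which periodic orbit becomes the strict minimizer, and how its Lyapunov exponent is pushed strictly below the exponents of all other invariant measures), the perturbation can be scaled down to be arbitrarily small in $C^{k}$ while preserving the membership $T_{\delta}\in\cU$. The latter amounts to checking that the spectral gap
\[
\inf\bigl\{\lambda_{T_{\delta}}(\nu)-\alpha(T_{\delta}):\nu\in\cM_{inv}(T_{\delta}),\ \nu\ \text{not the distinguished periodic measure}\bigr\}
\]
stays strictly positive throughout $(0,\delta_{0})$, which is a uniform-in-$\delta$ refinement of an estimate already present in the proof of Theorem~\ref{Thm:main-lyapunov}. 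No new dynamical ideas should be needed; the work is to verify that the relevant estimates are uniform in $\delta$ and that the bump $\varphi$ may be chosen $C^{\infty}$, which reduces to a standard use of smooth cutoff functions. Granting this, the construction above yields a $C^{k}$-open $C^{k}$-dense subset of $\cE^{k}(\TT)$ on which the Lyapunov minimizing measure is unique and periodic, establishing Conjecture~\ref{con1} in full.
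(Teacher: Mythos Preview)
For $k=2$ your reduction to Theorem~\ref{Thm:main-lyapunov} is correct and matches the paper. For $k>2$, however, the paper does not prove the conjecture; the sentence immediately following Conjecture~\ref{con1} reads ``Note that the conjecture of Jenkinson--Morris when $k>2$ is still open.'' Your proposal thus claims to settle an open problem by a routine adaptation, and that adaptation does not go through.

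The concrete obstacle is the localization scale of the perturbation. In the proof of Theorem~\ref{Thm:perturbative-extended} the map $h$ is supported on intervals of radius $\rho_\varepsilon G^*$ about the points of $\Gamma_T$ and must satisfy $Dh(p_i)=-\gamma_i\,\varepsilon\,\rho_\varepsilon G^*/(2K)$; the whole scheme hinges on this through \eqref{f.integral} and \eqref{f.first-perturbation-periodic}. Any smooth bump with these two features---first derivative of size $\sim\varepsilon\rho_\varepsilon G^*$ on an interval of width $\sim\rho_\varepsilon G^*$---has $j$-th derivative of order $\varepsilon(\rho_\varepsilon G^*)^{2-j}$. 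For $j=2$ this is $\sim\varepsilon$, which is exactly why the $C^{1,1}$ and $C^2$ results work; for $j\ge 3$ it blows up, because \eqref{f.choose-L} and \eqref{f.choose-rho} force $\rho_\varepsilon<K^{-L_\varepsilon}/(2K)$ with $L_\varepsilon>4K^6/\varepsilon$, so $\rho_\varepsilon G^*\to 0$ super-exponentially as $\varepsilon\to 0$.

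Your framing of the perturbation as a one-parameter family $T_\delta=T_0+\delta\varphi$ with a fixed smooth $\varphi$ is not what the construction produces, and the ``uniform-in-$\delta$ spectral gap'' you hope for is in fact false. The gap in \eqref{f.first-estimite-G} is $\varepsilon\rho_\varepsilon G^*/K^4-Kd_*-2\widetilde\varepsilon_0 K$: the positive first term is the amplitude of $Dh$ at the periodic points and would scale linearly with any factor $\delta$ you introduce, while the negative term $Kd_*$ comes from the distance of $\Gamma_T$ to $\operatorname{supp}(\mu_T)$ and does not scale at all. Shrink the perturbation below a threshold and the gap becomes negative, so the argument collapses. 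Arranging a bump wide enough that its higher derivatives stay controlled while still producing a positive gap is precisely the missing dynamical idea, not a bookkeeping check with smooth cutoff functions.
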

Note that  the conjecture of Jenkinson-Morris when $k>2$ is still open.

\medskip

Contreras-Lopez-Thieullen \cite{CLT} has proved that  for any $T$  in some dense open subset of $\bigcup_{1>\beta>\alpha}\cE^{1+\beta}$ endowed with the $C^{1,\alpha}$-distance, the Lyapunov minimizing/maximizing measures of $T$ are unique and periodic.

In $C^1$ topology, the situation is completely different. It has been proved by Jenkinson-Morris \cite{JM08} that for generic $C^1$ expanding self-maps on $\TT$, the Lyapunov minimizing is unique but has full support.

\subsection{Discussions in the higher-dimensional case}

In the higher-dimensional case, one interesting problem is to consider the ergodic optimization problem of the upper Lyapunov exponents. Let $M$ be a $d$-dimensional Riemannian manifold without boundary. Let $T:~M\to M$ be a $C^1$ self-map. Given an ergodic measure $\mu$ of $T$, as in \cite[Section C.1]{BDV05}, there is a measurable filtration for $\mu$-almost every point $x\in M$,
$$T_xM=E_1(x)\supset E_2(x)\supset \cdots \supset E_k(x)\supset E_{k+1}=\{0\}$$
and constants $\lambda_1>\lambda_2>\cdots>\lambda_k$ such that for any $1\le i\le k$ and for any $v\in E_i\setminus E_{i-1}$, one has that
$$\lim_{n\to\infty}\frac{1}{n}\log\|DT^n(v)\|=\lambda_i.$$
$\lambda_1$ is said to be the \emph{upper Lyapunov exponent} of $\mu$. An invariant measure $\mu$ is said to be the maximizing measure of $\lambda_1$ if
$\lambda_1(\mu)=\sup_{\nu\in\cM_{inv}(T)} \lambda_1(\nu)$. An invariant measure $\mu$ is said to be the minimizing measure of $\lambda_i$ if
$\lambda_1(\mu)=\inf_{\nu\in\cM_{inv}(T)} \lambda_1(\nu)$. By Cao \cite{Cao}, the maximizing measure of $\lambda_1$ does exist. Symmetrically, one knows the existence of minimizing measure of $\lambda_k$ (the lower Lyapunov exponent). We  have the following conjectures:

\begin{conj}Let $M$ be a $d$-dimensional compact Riemannian manifold. For an integer $k\ge 2$, for a $C^k$ generic  expanding self-map $T$, the upper Lyapunov exponent $\lambda_1$ has a unique Lyapunov maximizing
measure, and this measure is supported on a periodic orbit of $T$.

For $k=1$, for a $C^1$ generic expanding self-map $T$, the upper Lyapunov exponent admits a unique maximizing measure, which has zero entropy and full support.
\end{conj}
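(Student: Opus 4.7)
The plan is to tackle the conjecture in two regimes paralleling the paper's own two-regime treatment. For $k\ge 2$ I would try to adapt the strategy of Theorem~\ref{Thm:main-lyapunov} and Theorem~\ref{Thm:main-lyapunov-Holder}. First, density: given $T_0\in \cE^k(M)$ and $\varepsilon>0$, Cao's theorem provides a $\lambda_1$-maximizing measure $\mu$. Since expanding maps satisfy the specification property, periodic measures are weak-$*$ dense in $\cM_{inv}(T_0)$; combined with the subadditive thermodynamic formalism for $\log\|DT^n\|$ one finds a periodic orbit $\cO$ whose top Lyapunov exponent is arbitrarily close to $\sup_{\nu}\lambda_1(\nu)$. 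Then perform a $C^k$-small bump perturbation of $T_0$ supported in a neighborhood of $\cO$ that stretches $DT$ in the appropriate direction along $\cO$, strictly raising $\lambda_1(\mu_{\cO})$ above the $\lambda_1$-supremum taken over measures carried outside the perturbation region.

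Openness should come from a \emph{locking} argument in the spirit of Contreras and of the Lipschitz--$C^1$ Theorem~\ref{Thm:main-lyapunov-Holder}. Once the maximizer is supported on a hyperbolic periodic orbit $\cO$, one constructs a (Lipschitz) sub-action-type function $u$ satisfying
\[
u(Tx)-u(x)\ \ge\ \log\|DT(x)|_{E(x)}\|-\lambda_1(\mu_{\cO}),
\]
with equality along $\cO$, where $E$ is some continuous choice of top direction. Upper semicontinuity of $\lambda_1$ together with this subcohomological inequality then forces $\lambda_1(\nu)<\lambda_1(\mu_{\cO})$ for every other invariant $\nu$, and the strict gap persists under sufficiently small $C^k$ perturbations, yielding openness.

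The main obstacle is genuinely higher-dimensional. In dimension one the cocycle is scalar and $\log|DT|$ is a continuous observable, so the classical ergodic-optimization machinery (Ma\~n\'e's lemma, continuous sub-actions) applies directly. In dimension $d\ge 2$ the quantity $\lambda_1$ is the top Lyapunov exponent of a matrix cocycle; it is only upper semicontinuous, and the Oseledets top direction $E_1$ is merely measurable. A continuous field $E$ adapted to $\lambda_1$ in a neighborhood of $\cO$ requires a 1-dominated splitting there, which is not automatic. I expect the genuinely hard step to be proving that generic $C^k$ expanding maps admit such dominated behavior along their $\lambda_1$-maximizers, or alternatively invoking a Bochi--Viana-type dichotomy (dominated splitting, or else generic perturbations collapse the Lyapunov spectrum and contradict maximality).

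For the $C^1$ assertion I would follow Jenkinson--Morris closely. Uniqueness of the maximizer should come from a Baire-category argument exploiting upper semicontinuity of $T\mapsto\sup_{\nu}\lambda_1(\nu)$ together with convexity in $\nu$. Full support and zero entropy then follow from $C^1$ perturbation flexibility: a Ma\~n\'e-type redistribution of the derivative supported on any open set allows one to beat any candidate maximizer of positive entropy or of proper support, forcing the unique maximizer to be fully supported with zero entropy, exactly parallel to the Jenkinson--Morris description of the $C^1$-generic minimizer on $\TT$.
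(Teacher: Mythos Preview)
The statement you are attempting to prove is explicitly labeled as a \emph{conjecture} in the paper; the authors do not prove it and offer no argument for it. There is therefore no ``paper's own proof'' against which to compare your proposal. The paper's actual results (Theorems~\ref{Thm:main-lyapunov}--\ref{Thm:main-lyapunov-real-Holder-high}) concern either the one-dimensional case or, in higher dimensions, the \emph{sum} $\lambda_{sum}$ of all Lyapunov exponents, which reduces via Ruelle's formula to optimizing the continuous scalar observable $\log|\mathrm{Det}(DT)|$. The conjecture about the top exponent $\lambda_1$ is left entirely open.

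As for the content of your sketch: you have correctly identified the essential obstruction. The whole machinery of the paper---Ma\~n\'e's lemma, Lipschitz sub-actions, the gap estimates of Proposition~\ref{prop-2}, the perturbation $h$---is built for optimizing a \emph{continuous additive} potential. The top exponent $\lambda_1$ is only the growth rate of a subadditive sequence $\log\|DT^n\|$, it is merely upper semicontinuous in $\nu$, and the Oseledets direction $E_1$ is only measurable. Your proposed ``sub-action'' inequality $u(Tx)-u(x)\ge \log\|DT(x)|_{E(x)}\|-\lambda_1(\mu_{\cO})$ presupposes a continuous invariant line field $E$, which does not exist without a dominated splitting; and there is no known mechanism to produce such a splitting generically for expanding maps in the $C^k$ topology. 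The Bochi--Viana dichotomy you invoke is a $C^0$/$C^1$-cocycle phenomenon and does not transfer to $C^k$ perturbations of the map for $k\ge 2$. These are not technicalities to be filled in but the substantive reasons the statement remains a conjecture.
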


Although we do not know the existence of minimizing measure of the upper Lyapunov exponent $\lambda_1$, one can still formulate the following conjecture.
\begin{conj}
For generic expanding self-map $T$ on a manifold $M$, the minimizing measure of the upper Lyapunov exponent $\lambda_1$ exists, is unique and has zero entropy.

\end{conj}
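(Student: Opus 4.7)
The plan is to lift the ergodic-optimization framework behind Theorems~\ref{Thm:main-lyapunov} and~\ref{Thm:main-lyapunov-Holder} from the additive potential $\log|DT|$ to the subadditive cocycle $\{\log\|DT^n\|\}_{n\ge 1}$. In dimension $d\ge 2$ the functional
$$\lambda_1(\mu)=\inf_{n\ge 1}\frac{1}{n}\int\log\|DT^n\|\,d\mu$$
is only upper semicontinuous on the compact space $\cM_{inv}(T)$, so, in contrast to the $1$-dimensional setting, the existence of a minimizer is non-trivial and is the first obstacle to overcome.

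For each $n\ge 1$ I would introduce the weak-$*$ continuous linear functional $\Phi_n(\mu):=\tfrac{1}{n}\int\log\|DT^n\|\,d\mu$, which attains its minimum $m_n$ on a nonempty compact convex face $\cN_n\subset\cM_{inv}(T)$, with $m_n\searrow\inf\lambda_1$. Applying the Ma\~n\'e--Contreras-type locking/perturbation machinery used in the proof of Theorem~\ref{Thm:main-lyapunov-Holder} to the single continuous potential $\tfrac{1}{n}\log\|DT^n\|$, I would first show that, for each fixed $n$, there is a $C^k$-dense open set of expanding maps for which every extreme point of $\cN_n$ is a periodic orbit measure.

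The key new step is a uniform stabilization statement: for $T$ in a dense $G_\delta$ there exists $N=N(T)$ such that the periodic orbit minimizing $\Phi_N$ also minimizes $\Phi_n$ for every $n\ge N$. If this can be enforced generically, then its invariant measure realizes $\inf\lambda_1$; in particular a minimizer exists, is carried by a single periodic orbit, and \emph{a fortiori} has zero entropy. Uniqueness would then be obtained by one further generic perturbation within the locking neighborhood, breaking any tie among distinct extremal periodic minimizers. Since the conjecture only asks for zero entropy rather than a periodic support, a weaker alternative is acceptable in higher dimensions: the minimizer could be forced onto an aperiodic zero-entropy minimal set arising as a Hausdorff limit of the periodic minimizers of $\Phi_n$.

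The hard part is precisely the stabilization: we must control infinitely many cocycles $DT^n$ \emph{simultaneously} under one perturbation of $T$, whereas the $1$-dimensional proof only has to control a single Birkhoff average. A natural route is to combine the sub-additive variational principle with dominated-splitting perturbation techniques in the spirit of Bochi--Viana, reducing control of $\lambda_1$ to that of $\tfrac{1}{N}\log\|DT^N\|$ on a $C^k$-open neighborhood for some large $N$, after which the additive machinery of Theorems~\ref{Thm:main-lyapunov}--\ref{Thm:main-lyapunov-Holder} applies. Should stabilization fail on some exceptional set, zero entropy would instead be extracted by a Jacobian-perturbation argument: a positive-entropy minimizer, via ergodic decomposition, would contain an ergodic component along which an arbitrarily small $C^k$ perturbation strictly decreases $\lambda_1$, contradicting minimality.
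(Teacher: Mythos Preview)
The statement you are addressing is a \emph{conjecture} in the paper, not a theorem; the authors explicitly say ``we do not know the existence of minimizing measure of the upper Lyapunov exponent $\lambda_1$'' and leave it open. There is therefore no proof in the paper to compare your proposal against.

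On its own merits, your outline has the right diagnosis --- upper semicontinuity of $\lambda_1(\mu)=\inf_n\Phi_n(\mu)$ is exactly why existence of a minimizer is not free --- but the two pillars you rely on are both genuine gaps rather than technicalities. First, the ``locking'' argument from Theorems~\ref{Thm:main-lyapunov}--\ref{Thm:main-lyapunov-Holder} works because a local $C^{1,1}$ perturbation $h$ of $T$ supported near a periodic orbit $\Gamma$ changes the potential $\log|DT|$ \emph{only on that neighborhood}. For the potential $\tfrac{1}{n}\log\|DT^n\|$ this fails: even a perturbation supported near $\Gamma$ alters $\|DT^n(x)\|$ for every $x$ whose orbit visits that neighborhood within $n$ steps, so the estimates in $\cF_T$ (the ``far'' region) that drive the proof of Theorem~\ref{Thm:perturbative-extended} do not transfer. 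Second, and more seriously, the ``stabilization'' claim --- that for generic $T$ there is $N$ such that the $\Phi_N$-minimizer also minimizes $\Phi_n$ for all $n\ge N$ --- is essentially equivalent to forcing $\lambda_1$ to be additive along the minimizing measure, i.e.\ to producing an Oseledets-type dominated splitting there. Bochi--Viana perturbations destroy domination generically in $C^1$, they do not create it, and in $C^k$ for $k\ge 2$ there is no known perturbation tool that manufactures domination on a prescribed invariant set of an expanding map. Without stabilization, your fallback ``Jacobian-perturbation'' argument is also incomplete: a small $C^k$ perturbation can certainly lower $\lambda_1$ along one ergodic component, but nothing prevents it from simultaneously raising $\lambda_1$ on another measure below the original infimum, so no contradiction is reached.
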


One can still have results in the higher-dimensional case on the sum of Lyapunov exponents. Given an ergodic measure $\mu$, denote by
$$\lambda_{sum}=\sum_{i=1}^k\lambda_i(\dim E_i-\dim E_{i+1}).$$
By \cite[Proposition 1.3, Theorem 1.6]{Rue79}, to find the optimal measures of $\lambda_{sum}$ is equivalent to find the optimal measures of the continuous function $\log|{\rm Det}(T)|$. Thus, it is essentially the same as the one-dimensional case. One has the following theorems.

Denote by $\cE^{k}(M)$ and $\cE^{k,\chi}(M)$ the spaces of $C^k$ expanding maps and of $C^{k+\chi}$ expanding maps, respectively.
\begin{theoremalph}\label{Thm:main-lyapunov-high}
There is a dense open set ${\mathcal U}\subset\cE^{2}(M)$ such that the minimizing measure with respect to $\lambda_{sum}$ of $T\in\cU$ is unique and supported on a periodic orbit.
\end{theoremalph}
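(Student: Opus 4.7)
The plan is to reduce Theorem~\ref{Thm:main-lyapunov-high} to a higher-dimensional variant of the ergodic-optimization problem established in Theorem~\ref{Thm:main-lyapunov}, applied to the continuous observable
$$\phi_T(x) := \log|\mathrm{Det}(DT(x))|, \qquad x \in M.$$
By the remark preceding the statement, based on \cite[Proposition 1.3, Theorem 1.6]{Rue79}, one has $\lambda_{sum}(\mu) = \int \phi_T\, d\mu$ for every $\mu\in\cM_{inv}(T)$, so minimizing $\lambda_{sum}$ is the same as minimizing $\mu\mapsto \int \phi_T\, d\mu$. Crucially, the assignment $T\mapsto\phi_T$ is continuous from $\cE^2(M)$ to $C^1(M)$, which matches the regularity regime of Theorem~\ref{Thm:main-lyapunov}: even though $T$ is $C^2$, the relevant observable is only $C^1$.

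For openness, I would take $\cU$ to consist of those $T\in\cE^2(M)$ admitting a hyperbolic periodic orbit $O_T$ and a Lipschitz sub-action $u_T:M\to\RR$ satisfying
$$\phi_T - u_T\circ T + u_T \geq \alpha(T)$$
everywhere, with equality precisely on $O_T$ and a quantitative gap outside each neighborhood of $O_T$. Openness then follows from two standard ingredients: hyperbolic periodic orbits persist under $C^1$ perturbations, and a strict sub-cohomological inequality is preserved under small $C^0$ perturbations of the observable, which in our case are produced by small $C^2$ perturbations of $T$. Uniqueness of the Lyapunov minimizing measure and its support on $O_T$ for $T\in\cU$ then follow from the standard cohomological argument: any minimizing measure must be supported on the equality set of the sub-action.

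For density, given $T_0\in\cE^2(M)$, I would follow the three-step scheme of Theorem~\ref{Thm:main-lyapunov-Holder}: (i) use a Ma\~n\'e/Conze--Guivarc'h type lemma for $C^1$ expanding maps on $M$, obtained via Markov partitions and the contraction of the associated transfer operator, to produce a Lipschitz sub-action for $\phi_{T_0}$; (ii) make a small $C^2$ perturbation $T_1$ of $T_0$ so that $\alpha(T_1)$ is attained on some hyperbolic periodic orbit $O$, using shadowing and the fact that periodic orbits accumulate on the Aubry set; (iii) perform a further $C^2$ perturbation $T_2$ supported away from $O$ that strictly lifts $\phi_{T_2} + u\circ T_2 - u$ above $\alpha(T_2)$ off $O$. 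The principal obstacle is step (iii): a $C^2$ perturbation of $T$ changes $\phi_T = \log|\mathrm{Det}\, DT|$ in a constrained manner, unlike the abstract setting of a free observable, and one must verify that this family of constrained perturbations is still flexible enough to strictly lift the sub-cohomological inequality off $O$ while leaving $\phi_T|_O$ untouched. This is the higher-dimensional analogue of the main technical core of Theorems~\ref{Thm:main-lyapunov} and \ref{Thm:main-lyapunov-Holder}, and I expect it to go through because in dimension $d$ the additional degrees of freedom in the volume distortion permit one to realize arbitrary $C^0$-small prescribed adjustments of $\phi_T$ by $C^2$-small adjustments of $T$ localized in small balls disjoint from $O$.
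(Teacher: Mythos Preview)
Your reduction to the ergodic optimization of $\phi_T=\log|\mathrm{Det}(DT)|$ is correct and matches the paper, which then declares the proof to follow the same lines as Theorem~\ref{Thm:main-lyapunov}. Your openness argument via a persistent strict sub-action is also essentially right.

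Your density step (iii), however, points in the wrong direction and contains a real gap. You propose to perturb in balls \emph{disjoint} from the periodic orbit $O$ so as to strictly lift the sub-cohomological expression above $\alpha$ off $O$. But the equality set of the sub-action typically contains points arbitrarily close to $O$---indeed, $O$ was selected in your step (ii) precisely to shadow that set---and a perturbation supported at positive distance from $O$ cannot raise $\phi$ at those nearby Aubry points; strict inequality still fails there and $\delta_O$ need not become the unique minimizer. The paper's mechanism (Theorem~\ref{Thm:perturbative-extended}) is the opposite: one perturbs in small balls \emph{centred at} the points of the periodic orbit $\Gamma_T$ so as to \emph{lower} the observable there by a definite amount $\varepsilon\rho_\varepsilon G^*/K^4$. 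The quantitative input you are missing is Proposition~\ref{prop-2}, which furnishes a periodic orbit whose minimal gap $G^*=G(\Gamma_T)$ dominates $C_\varepsilon\cdot d_*$, where $d_*=\sum_{x\in\Gamma_T}d(x,\mathrm{supp}(\mu_T))$ and $C_\varepsilon$ is as large as one likes. Because the local perturbation has room $\rho_\varepsilon G^*$ around each periodic point, the drop in $\phi$ at $\Gamma_T$ (of order $\varepsilon\rho_\varepsilon G^*$) strictly beats the amount $\lesssim K d_*$ by which $\Gamma_T$ originally sat above $\alpha(T)$, making $\Gamma_T$ the unique minimizer robustly in a $C^{1,1}$ neighbourhood, and then (via the mollification in Theorem~\ref{Thm:perturbative-smooth}) in a $C^2$ neighbourhood. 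In higher dimensions the identical scheme applies with a local perturbation decreasing $|\mathrm{Det}(DT)|$ near each $p\in\Gamma_T$; no perturbation away from $\Gamma_T$ is used.
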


\begin{theoremalph}\label{Thm:main-lyapunov-real-Holder-high}
Assume that $\chi\in(0,1]$. There is a dense open set ${\mathcal U}\subset\cE^{1,\chi}(M)$ such that the minimizing measure with respect to $\lambda_{sum}$ of $T\in\cU$ is unique and supported on a periodic orbit.
\end{theoremalph}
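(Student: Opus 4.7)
The plan is to leverage the reduction already recorded in the paper: by \cite[Proposition 1.3, Theorem 1.6]{Rue79}, for every $T$-invariant probability measure $\mu$ one has $\lambda_{sum}(\mu) = \int \log |{\rm Det}(DT)|\, d\mu$, so the problem becomes finding the minimizing measure of the continuous potential $f_T := \log |{\rm Det}(DT)|$ over $\cM_{inv}(T)$. For $T \in \cE^{1,\chi}(M)$, $DT$ is $\chi$-H\"older and bounded away from singular, so $f_T$ is $\chi$-H\"older on $M$, and the assignment $T \mapsto f_T$ is continuous from $\cE^{1,\chi}(M)$ into $C^{\chi}(M)$. Hence the theorem is an ergodic-optimization statement for a H\"older potential under a $C^{1,\chi}$ expanding system on $M$, with the twist that the potential itself depends on the dynamics.

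For density, starting from any $T_0 \in \cE^{1,\chi}(M)$ with a $\lambda_{sum}$-minimizing measure $\mu_0$, I would mimic the strategy of Theorem~\ref{Thm:main-lyapunov-real-Holder}: first use a closing-type $C^{1,\chi}$-small perturbation supported in a small ball to produce a periodic orbit $\mathcal{O}$ whose $\lambda_{sum}$-average is arbitrarily close to $\alpha(T_0)$; then perform a second local perturbation that strictly decreases $\log|{\rm Det}(DT)|$ along $\mathcal{O}$ while keeping the map expanding. Combined with the existence of a Lipschitz subaction for H\"older potentials over expanding maps (in the spirit of Contreras-Lopes-Thieullen \cite{CLT}), an Aubry--Mather comparison then shows that the periodic measure supported on $\mathcal{O}$ is the unique minimizer for the perturbed map.

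Openness is then handled as in the one-dimensional case. A unique periodic minimizer certified by a Lipschitz subaction with strict positivity off $\mathcal{O}$ is a robust property: structural stability of expanding endomorphisms on $M$ produces a nearby hyperbolic periodic orbit for every $C^{1,\chi}$-close map, and continuity of $T \mapsto f_T$ into $C^{\chi}$ preserves the strict subaction inequality in a full $C^{1,\chi}$-neighborhood, so the minimizer stays uniquely supported on the continued periodic orbit.

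The main obstacle is the simultaneous variation of $T$ and $f_T$: every $C^{1,\chi}$ perturbation of $T$ also perturbs the Jacobian cocycle, the set of invariant measures, and the subaction. The analytical core is to design local perturbations so that the quantitative gain achieved at $\mathcal{O}$ strictly dominates the uncontrolled changes elsewhere, a point already handled in the Lipschitz-$C^1$ argument underlying Theorem~\ref{Thm:main-lyapunov-real-Holder}. In higher dimensions one must additionally extend the bump-function and closing-lemma constructions from $\TT$ to the Riemannian manifold $M$, but the Markov structure and structural stability available for expanding endomorphisms allow this extension to proceed locally in charts without new conceptual difficulty.
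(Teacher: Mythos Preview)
The paper omits the proof of this theorem, saying only that it ``follow[s] almost the same lines'' as the one-dimensional arguments for Theorems~\ref{Thm:main-lyapunov-Holder} and~\ref{Thm:main-lyapunov-real-Holder}.  Your outline is broadly consistent with that intended reduction: once one invokes \cite{Rue79} to rewrite $\lambda_{sum}(\mu)=\int\log|{\rm Det}(DT)|\,d\mu$, the problem is exactly the ergodic optimization of a $\chi$-H\"older potential that varies with $T$, and the density/openness scheme you describe is the same mechanism as in Theorem~\ref{Thm:perturbative-extended}.

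Two points, however, separate your sketch from the paper's actual machinery.  First, and most substantively, you propose a ``closing-type $C^{1,\chi}$-small perturbation \dots\ to produce a periodic orbit $\mathcal{O}$''.  The paper does \emph{not} perturb to create the periodic orbit: expanding maps already have dense periodic orbits, and the crucial step is instead Proposition~\ref{prop-2} (taken from \cite{HLMXZ}), which for the \emph{unperturbed} map selects a periodic orbit $\Gamma$ with
\[
G(\Gamma)\;>\;C\sum_{x\in\Gamma}d\bigl(x,{\rm supp}(\mu_T)\bigr)
\]
for an arbitrarily large constant $C$.  This inequality is precisely the quantitative lever that lets a $C^{1,\chi}$-small bump near $\Gamma$ lower $\log|{\rm Det}(DT)|$ along $\Gamma$ by strictly more than both the initial defect $\lambda_{sum}(\delta_\Gamma)-\alpha(T)$ and the uncontrolled variation of the potential and subaction caused by the perturbation; without it, ``the quantitative gain achieved at $\mathcal{O}$ strictly dominates the uncontrolled changes elsewhere'' is an assertion rather than a proof.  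Your reference to ``a point already handled in the Lipschitz-$C^1$ argument'' is correct, but that handling \emph{is} Proposition~\ref{prop-2}, and it should be named.

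Second, a minor correction: for $\chi<1$ the Ma\~n\'e subaction furnished by Lemma~\ref{lem-Mane} (in its H\"older form, cf.\ \cite{Bou,CLT,S}) is only $\chi$-H\"older, not Lipschitz; the estimates in the perturbative argument must accordingly be run with H\"older moduli $d(\cdot,\cdot)^{\chi}$ rather than Lipschitz constants.
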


The proof of these two theorems follow almost the same lines of the above ones, hence are omitted.

In the $C^1$ topology, the optimization problem of $\lambda_{sum}$ has been considered in \cite{MT}.

\begin{Acknowledgements}

We are grateful to Jianyu Chen for providing some references.

\end{Acknowledgements}
\section{Proof of Theorem~\ref{Thm:main-lyapunov}}
In this section, we are going to prove Theorem~\ref{Thm:main-lyapunov}. Denote by $\cL^{-}(T)$/$\cL^+(T)$ the set of Lyapunov minimizing/maximizing measures of $T$, respectively.

We need the following version of Ma\~n\'{e}'s Lemma.
	\begin{lem}\label{lem-Mane} Let  $T$ be a $C^{1,1}$ expanding self-map of $\mathbb{T}$.  Then there exists a Lipschitz map $f$ from $\TT$ to $\RR$ such that
	\begin{align}\label{AA}\bigcup_{\nu\in \cL^-(T)}{\rm supp}(\nu)\subset \{y\in\mathbb{T}:F(y)=\inf_{x\in\mathbb{T}}F(x)=\alpha(T)\}.\end{align}
	where $F(x)=f(T(x))-f(x)+\log\|DT(x)\|$.
	\end{lem}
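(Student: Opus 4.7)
The plan is to build a Lipschitz \emph{sub-action} $f$ and then derive (\ref{AA}) by a routine integration argument. Write $\phi(x):=\log\|DT(x)\|-\alpha(T)$. Since $T$ is $C^{1,1}$ and expanding, $\phi$ is Lipschitz, with some constant $L$, and by the definition of $\alpha(T)$ one has $\int\phi\,d\mu\geq 0$ for every $\mu\in\mathcal{M}_{inv}(T)$, with equality precisely when $\mu\in\cL^{-}(T)$. The target inequality $F(x)\geq\alpha(T)$ becomes, after the substitution $u:=-f$, $u(y)+\phi(y)\geq u(T(y))$ for all $y\in\TT$; equivalently, $u$ should be a Lipschitz fixed point of the min-plus transfer operator $\B g(x):=\min_{T(y)=x}[g(y)+\phi(y)]$.

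To construct $u$, I would invoke Schauder's fixed point theorem. After possibly replacing $T$ by an iterate, fix $\lambda>1$ with $\|DT\|\geq\lambda$ and set $K:=L/(\lambda-1)$. Each inverse branch of $T$ contracts by $\lambda^{-1}$, so for any $K$-Lipschitz $g$ and corresponding preimages $y,y'$ of nearby $x,x'$,
\[
|g(y)+\phi(y)-g(y')-\phi(y')|\leq (K+L)\lambda^{-1}|x-x'| = K\,|x-x'|,
\]
i.e.\ $\B g$ is again $K$-Lipschitz. Hence $\Phi(g):=\B g-\min_{\TT}\B g$ is a continuous self-map of the compact convex set $\Lambda:=\{g\in C^{0}(\TT):g\text{ is }K\text{-Lipschitz and }\min_{\TT}g=0\}$, and Schauder produces $u\in\Lambda$ with $\B u=u+c$ for the constant $c:=\min_{\TT}\B u$.

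The remaining task is to identify $c=0$. Taking any preimage in the definition of $\B$ gives $\B u(T(y))\leq u(y)+\phi(y)$, so $u\circ T-u+c\leq\phi$ pointwise; integrating against any $\mu\in\mathcal{M}_{inv}(T)$ yields $c\leq\int\phi\,d\mu$, hence $c\leq\alpha(\phi)=0$. Conversely, iteratively selecting the minimizing preimage in $\B u$ produces an orbit $y_0,y_1,y_2,\ldots$ with $T(y_{n+1})=y_n$ and $\phi(y_{n+1})=u(y_n)-u(y_{n+1})+c$; telescoping gives $\sum_{k=0}^{n-1}\phi(T^k y_n)=u(y_0)-u(y_n)+nc\leq \mathrm{osc}(u)+nc$, and by the classical formula $\alpha(\phi)=\lim_n\inf_x\frac{1}{n}\sum_{k=0}^{n-1}\phi(T^k x)$ (valid for continuous $\phi$ on compact $\TT$) one concludes $\alpha(\phi)\leq c$. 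Thus $c=0$, so $\B u=u$ and $u$ is the desired Lipschitz sub-action.

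Setting $f:=-u$, the relation $u(y)+\phi(y)\geq u(T(y))$ reads precisely $F(y)\geq\alpha(T)$ on all of $\TT$. For any $\mu\in\cL^{-}(T)$, invariance kills the coboundary, $\int(f\circ T-f)\,d\mu=0$, so $\int F\,d\mu=\lambda_T(\mu)=\alpha(T)$; combined with the pointwise lower bound this forces $F=\alpha(T)$ $\mu$-almost everywhere, and continuity of $F$ upgrades the equality to the full support of $\mu$, yielding (\ref{AA}). The main obstacle is the Schauder step together with the two-sided bound pinning $c$ down to $\alpha(\phi)=0$; the Lipschitz regularity of $f$, as opposed to mere continuity, is exactly where the $C^{1,1}$ hypothesis on $T$---equivalently, the Lipschitz regularity of $\log\|DT\|$---is used.
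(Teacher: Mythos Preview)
Your argument is essentially correct and, unlike the paper, actually proves the result: the paper's ``proof'' simply observes that $\log\|DT\|$ is Lipschitz (because $T\in C^{1,1}$) and then invokes the classical Ma\~n\'e Lemma as a black box, citing \cite{Bou1,S,Bou,CLT,CG}. What you have written is one of the standard proofs of that lemma---Bousch's approach via the min-plus (Lax--Oleinik) transfer operator and a Schauder fixed point, followed by the two-sided identification of the additive eigenvalue $c$ with $\alpha(\phi)=0$, and then the routine integration-plus-continuity argument for the inclusion of supports. So the two routes are the same in spirit; yours is simply self-contained where the paper defers to the literature.

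Two small technical slips, both easy to repair. First, the set $\Lambda=\{g:\text{$g$ is $K$-Lipschitz},\ \min_{\TT}g=0\}$ is \emph{not} convex (the minima of $g_1$ and $g_2$ need not be attained at the same point, so a convex combination can have strictly positive minimum); replace the normalization $\min_{\TT} g=0$ by $g(x_0)=0$ at a fixed basepoint, or work in $C^{0}(\TT)/\RR$, and Schauder then applies. Second, ``replacing $T$ by an iterate'' yields a sub-action for $T^n$, not for $T$, and an extra averaging step (e.g.\ $v=\tfrac{1}{n}\sum_{j=0}^{n-1}(u\circ T^{j}-\psi_j)$, which one checks satisfies $v\circ T-v\le\psi$) is needed to descend; the cleaner fix, used elsewhere in the paper, is to pass to an adapted metric in which $\|DT\|>1$ pointwise and note that Lipschitz regularity survives equivalent metrics.
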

\begin{proof}
	Notice that $\log\|DT\|$ is Lipschitz since $T$ is $C^{1,1}$.
This Lemma follows immediately from  the classical Ma\~n\'e's Lemma for the expanding self-map $T$ and the Lipschitz function $\log\|DT\|$ (see
\cite{Bou1,S,Bou,CLT,CG} for various versions and approaches).
	\end{proof}
	
Expanding self-maps on the circle are structurally stable: this was proved by Shub \cite{Shu69}. We can also have the information on the conjugacy maps, see \cite[Lemma 2]{JM08} and \cite[Proposition 5.1.6]{PrU10} for a precise proof.
\begin{thm}\label{Thm:conjugacy}
Let $S_0$ be a $C^1$ expanding self-map of $\TT$. For any $\widetilde\varepsilon_0>0$, there is $\widetilde\varepsilon>0$ such that for any $S$, if $d_{C^1}(S,S_0)<\widetilde\varepsilon$, then there is a homeomorphism $\pi_S:~\TT\to\TT$ such that
\begin{itemize}
\item $d_{C^0}(\pi_S,Id)<\widetilde\varepsilon_0$,
\item $\pi_S\circ S_0=S\circ \pi_S$.

\end{itemize}

\end{thm}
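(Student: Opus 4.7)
The plan is to follow the classical Shub structural-stability argument via a Banach fixed-point scheme on the universal cover. Let $d$ denote the degree of $S_0$; when $\widetilde\varepsilon$ is small, $C^1$-closeness forces $S$ to have the same degree, since degree is $C^0$-locally constant on covering maps. Fix lifts $\widetilde S_0,\widetilde S:\mathbb{R}\to\mathbb{R}$ with $\widetilde S_i(x+1)=\widetilde S_i(x)+d$. Since $S_0$ is expanding with constant $\lambda_0>1$ and $S$ is $C^1$-close, after shrinking $\widetilde\varepsilon$ I may assume both lifts are strictly increasing diffeomorphisms of $\mathbb{R}$ with derivative bounded below by some $\lambda_1>1$, so $\widetilde S^{-1}$ is well-defined and $(1/\lambda_1)$-Lipschitz.

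Let $X$ be the space of continuous maps $h:\mathbb{R}\to\mathbb{R}$ satisfying $h(x+1)=h(x)+1$, endowed with $\|h_1-h_2\|_{\infty}$ (finite since $h_1-h_2$ is $1$-periodic); this is a complete metric space. Define $\mathcal{F}:X\to X$ by $\mathcal{F}(h)=\widetilde S^{-1}\circ h\circ \widetilde S_0$. Using $\widetilde S(z+1)=\widetilde S(z)+d$ one checks that $\mathcal{F}$ preserves the equivariance, and $\mathcal{F}$ is a $(1/\lambda_1)$-contraction because precomposition with $\widetilde S_0$ is a $C^0$-isometry while postcomposition with $\widetilde S^{-1}$ is $(1/\lambda_1)$-Lipschitz. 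The Banach fixed-point theorem gives a unique $\widetilde\pi\in X$ with $\widetilde S\circ\widetilde\pi=\widetilde\pi\circ\widetilde S_0$; it descends to $\pi_S:\mathbb{T}\to\mathbb{T}$ satisfying the required conjugacy equation.

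For the $C^0$ bound, the standard Banach estimate yields
\[
\|\widetilde\pi-\mathrm{Id}\|_\infty\le \frac{1}{1-1/\lambda_1}\|\mathcal{F}(\mathrm{Id})-\mathrm{Id}\|_\infty\le \frac{1}{\lambda_1-1}\|\widetilde S-\widetilde S_0\|_\infty\le \frac{\widetilde\varepsilon}{\lambda_1-1},
\]
which is less than $\widetilde\varepsilon_0$ once $\widetilde\varepsilon$ is chosen small enough. To upgrade $\pi_S$ from a continuous degree-one map to a homeomorphism, I would swap the roles of $S_0$ and $S$ in the construction to produce $\pi':\mathbb{T}\to\mathbb{T}$ with $S_0\circ\pi'=\pi'\circ S$, and then apply the uniqueness part of Banach's theorem to the analogous operator associated to the pair $(S,S)$, for which $\mathrm{Id}$ is an obvious fixed point, forcing $\pi_S\circ\pi'=\pi'\circ\pi_S=\mathrm{Id}$.

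The main technical point is really just ensuring that $\widetilde S$, not only $\widetilde S_0$, is an expanding homeomorphism of $\mathbb{R}$ with a uniformly Lipschitz inverse; this follows from openness of the expanding condition in the $C^1$ topology together with degree preservation. Once that is in hand, the argument is a clean application of the contraction principle, and no extra regularity or ergodic-theoretic input is required.
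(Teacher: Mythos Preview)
Your argument is correct and is precisely the classical Shub contraction scheme; the paper itself offers no proof of this statement, instead citing \cite{Shu69}, \cite[Lemma~2]{JM08}, and \cite[Proposition~5.1.6]{PrU10}, whose arguments are essentially the one you have written out. One minor caveat: the paper's definition of ``expanding'' only requires $\|DT^n\|\ge C\lambda^n$ and so does not guarantee $\|DS_0'\|>1$ pointwise; hence the claim that $\widetilde S^{-1}$ is a $(1/\lambda_1)$-contraction needs either passage to an iterate or to an adapted metric, but this is routine and the paper in any case reduces to the pointwise-expanding situation before invoking the theorem.
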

%

Let $T$ be a $C^1$ expanding self-map of $\mathbb{T}$ with $\|DT(x)\|>1$ for all $x\in\mathbb{T}$. Let $\Gamma$ be a periodic orbit of $T$. Define the gap of $\Gamma$ by
$$G(\Gamma)=\left\{\begin{array}{ll}
\frac{1}{20\max_{x\in\TT}\|DT(x)\|}, & \text{ if } \#\Gamma=1,\\
\min_{x,y\in\Gamma, x\not=y}d(x,y), & \text{ others }.
\end{array}
\right.$$

One has the following expansive-like Lemma.
\begin{lem}\label{shadow} Let  $T$ be a $C^1$ expanding self-map of $\mathbb{T}$ with $\|DT(x)\|>1$  for  each $x\in\mathbb{T}$ and $\Gamma$ be a periodic orbit of $T$. If $z\in\mathbb{T}$ satisfies
	$$d(T^iz,\Gamma)<\frac{G(\Gamma)}{2\max_{x\in\TT}\|DT(x)\|}\text{ for all }i\in\mathbb{N}\cup\{0\},$$
	then $z\in\Gamma$.
\end{lem}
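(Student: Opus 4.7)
The strategy is to show that the orbit of $z$ tracks one specific periodic orbit inside $\Gamma$ step by step, reducing the statement to a fixed-point claim for $T^k$ with $k=\#\Gamma$, and then to exploit uniform expansion to force equality.

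Set $M=\max_{x\in\TT}\|DT(x)\|$ and $\lambda_0=\min_{x\in\TT}\|DT(x)\|$; by compactness $\lambda_0>1$. Let $\varepsilon=G(\Gamma)/(2M)$, so the hypothesis reads $d(T^i z,\Gamma)<\varepsilon$ for every $i\ge 0$, and enumerate $\Gamma=\{p_0,\ldots,p_{k-1}\}$ with $Tp_j=p_{j+1\bmod k}$. For each $i\ge 0$ choose $j_i$ with $d(T^i z,p_{j_i})<\varepsilon$. Assuming $k\ge 2$, I first claim that $p_{j_{i+1}}=Tp_{j_i}$. Indeed, the shortest arc from $T^i z$ to $p_{j_i}$ has length $<\varepsilon\le 1/(2M)$, so its image under $T$ is an arc of length at most $M\varepsilon=G(\Gamma)/2\le 1/2$ and hence realizes the circle distance between its endpoints; this gives $d(T^{i+1}z,Tp_{j_i})\le M\,d(T^i z,p_{j_i})<G(\Gamma)/2$. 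Combining with $d(T^{i+1}z,p_{j_{i+1}})<\varepsilon\le G(\Gamma)/2$ via the triangle inequality yields $d(Tp_{j_i},p_{j_{i+1}})<G(\Gamma)$, and the definition of $G(\Gamma)$ forces $Tp_{j_i}=p_{j_{i+1}}$. (The case $k=1$ is automatic.)

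Setting $p=p_{j_0}$, we obtain $T^k p=p$ and $d(T^{nk}z,p)<\varepsilon$ for all $n\ge 0$. The same short-arc arithmetic, now applied to $T^k$ (which is uniformly expanding with rate at least $\lambda_0^k>1$), yields $d(T^{(n+1)k}z,p)\ge \lambda_0^k\,d(T^{nk}z,p)$, since the shortest arc from $T^{nk}z$ to $p$ has length $<\varepsilon\le 1/(2M)$ and its $T^k$-image is an arc of length at most $1/2$, which therefore equals the resulting circle distance. Iterating, $d(T^{nk}z,p)\ge \lambda_0^{nk}\,d(z,p)$, and the bound by $\varepsilon$ forces $d(z,p)=0$, so $z=p\in\Gamma$.

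The delicate point is the compatibility of the Lipschitz and expansion inequalities with the circle metric: neither $d(T\cdot,T\cdot)\le M\,d(\cdot,\cdot)$ nor the reverse with $\lambda_0$ holds as a pointwise intrinsic statement once short arcs wrap around. Everything goes through because the normalization $G(\Gamma)\le 1$ gives $\varepsilon\le 1/(2M)$, which keeps every arc appearing in the induction of length at most $1/2$ and therefore equal to the intrinsic circle distance between its endpoints. The asymmetric definition $G(\Gamma)=1/(20M)$ in the case $\#\Gamma=1$ is calibrated precisely so that this short-arc estimate closes the loop without appealing to any gap between distinct points of $\Gamma$.
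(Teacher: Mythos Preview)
Your tracking argument in the first paragraph is clean and correct, and the overall strategy is sound. The one genuine gap is in your expansion step: you assert that the $T^k$-image of the shortest arc from $T^{nk}z$ to $p$ has length at most $1/2$, but the only direct bound available is $M^k\varepsilon = M^{k-1}G(\Gamma)/2$, which for $k\ge 2$ can be arbitrarily large. The remedy is already in your hands: drop the detour through $T^k$ and iterate $T$ one step at a time. Your tracking shows $d(T^i z, T^i p)<\varepsilon$ for \emph{every} $i\ge 0$, so at each step the shortest arc has length $<\varepsilon\le 1/(2M)$, its $T$-image has length $<M\varepsilon\le 1/2$ and hence realizes the circle distance, and expansion gives $d(T^{i+1}z,T^{i+1}p)\ge \lambda_0\, d(T^i z,T^i p)$. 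Iterating yields $\lambda_0^{i}\,d(z,p)\le d(T^i z,T^i p)<\varepsilon$ for all $i$, forcing $z=p$.

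For comparison, the paper argues by contradiction without setting up the tracking. Assuming $z\notin\Gamma$, it picks the nearest $p\in\Gamma$ and uses expansion together with the upper Lipschitz bound to find the \emph{first} $m$ with
\[
\frac{G(\Gamma)}{2M}\le d(T^m z,T^m p)<\frac{G(\Gamma)}{2};
\]
the gap definition then forces $T^m p$ to remain the nearest point of $\Gamma$ to $T^m z$, so $d(T^m z,\Gamma)\ge G(\Gamma)/(2M)$, a contradiction. Your approach makes the shadowing and the short-arc arithmetic fully explicit (and your closing remarks on why $G(\Gamma)\le 1$ keeps all arcs short are a nice touch); the paper's first-exit argument is a bit quicker because it only needs control at one time rather than along the whole orbit.
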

\begin{proof}
We will prove by contradiction and assume that $z\notin\Gamma$. One can find $p\in\Gamma$ such that $d(z,p)=d(z,\Gamma)$. {If $\#\Gamma=1$, by the expansion property, one can find $m\in\NN$ such that
$$d(T^m(z),T^m(p))\ge\frac{1}{2\max_{x\in\TT}\|DT(x)\|}>\frac{G(\Gamma)}{2\max_{x\in\TT}\|DT(x)\|}.$$}
Now one can assume that $\#\Gamma>1$. By the expansion property, there is $m\in\NN$ such that
$$\frac{G(\Gamma)}{2\max_{x\in\TT}\|DT(x)\|}\le d(T^m(z),T^m(p))< \frac{G(\Gamma)}{2}.$$
For any $q\in\Gamma\setminus\{p\}$, one has that
$$d(T^m(z),T^m(q))\ge d(T^m(p),T^m(q))-d(T^m(z),T^m(p))\ge G(\Gamma)-G(\Gamma)/2\ge G(\Gamma)/2.$$
Thus $$d(T^m(z),\Gamma)=d(T^m(z),T^m(p))\ge \frac{G(\Gamma)}{2\max_{x\in\TT}\|DT(x)\|}.$$
This contradicts to the assumption.
\end{proof}

\subsection{Periodic orbits with large inner distance}

To prove Theorem~\ref{Thm:main-lyapunov} and Theorem~\ref{Thm:main-lyapunov-Holder}, we need the following proposition which is a particular case of \cite[Proposition 3.1]{HLMXZ}.
 	\begin{pro}\label{prop-2} Let  $T$ be a $C^{1}$ expanding self-map of $\mathbb{T}$ with $\|DT(x)\|>1$ for all $x\in\mathbb{T}$ and $E\subset \mathbb{T}$ be a nonempty compact invariant subset of $T$. Then for any $C>0$, there exists a periodic orbit $\Gamma$ of $T$ depending on $C$ such that
 		$$G(\Gamma)>C\cdot \sum_{x\in\Gamma}d(x,E).$$
 	\end{pro}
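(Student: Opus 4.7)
The plan is to build $\Gamma$ by applying Anosov's closing lemma for uniformly expanding maps to a periodic pseudo-orbit contained in $E$, and then to verify the ratio bound by tuning parameters.

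First I would dispatch the trivial case: if $E$ contains a periodic orbit $\Gamma_0$, take $\Gamma=\Gamma_0$; then $\sum_{x\in\Gamma}d(x,E)=0$ while $G(\Gamma)>0$, and the inequality is satisfied for every $C$. Henceforth assume $E$ contains no periodic orbit, so $E$ is infinite.

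Fix a scale $\eta>0$, to be tuned later. A maximal $\eta$-separated subset $\{p_1,\dots,p_m\}\subset E$ satisfies $E\subset\bigcup_i B(p_i,\eta)$ and $m=O(1/\eta)$. Using $T(E)\subset E$, write $T(p_i)\in B(p_{\sigma(i)},\eta)$ for some self-map $\sigma$ of $\{1,\dots,m\}$; any cycle $i_0\mapsto i_1\mapsto\cdots\mapsto i_{k-1}\mapsto i_0$ of $\sigma$ gives $k\le m$ distinct points $p_{i_0},\dots,p_{i_{k-1}}\in E$ that are pairwise at least $\eta$-apart and form an $\eta$-pseudo-periodic orbit. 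To sharpen the dynamical error from $\eta$ down to $\delta\ll\eta$, I would iterate under a high power $T^N$: since $\lambda:=\inf_{x\in\TT}\|DT(x)\|>1$, the effective shadowing constant for $T^N$ decays like $1/(\lambda^N-1)$, so by walking orbit segments of length $N$ inside $E$ between consecutive $p_{i_j}$'s one assembles a closed $\delta$-pseudo-orbit of total length $kN$ whose ``special points'' (spaced $N$ apart) remain near the $p_{i_j}$ and hence $\eta$-separated.

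Anosov's closing lemma for the expanding $T$ then yields a genuine periodic orbit $\Gamma$ of period dividing $kN$ that $K\delta$-shadows the pseudo-orbit, with $K$ depending only on $\lambda$. This gives $\sum_{x\in\Gamma}d(x,E)\le kN(K+1)\delta$, while the shadows of $p_{i_0},\dots,p_{i_{k-1}}$ remain at mutual distance $\ge\eta-2K\delta$. Taking $\delta$ sufficiently small in terms of $C,k,N,K,\eta$ then yields $G(\Gamma)>C\sum_{x\in\Gamma}d(x,E)$, completing the construction.

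The hard part will be controlling $G(\Gamma)$ for the \emph{entire} orbit, not merely for the $k$ distinguished shadow points: one must ensure that the intermediate orbit points (those lying between consecutive $p_{i_j}$-shadows) neither cluster with each other nor drift close to the special shadow points. This dispersion property requires a careful selection of the inner pseudo-orbit segments, exploiting the uniform expansion of $T$ to spread nearby orbit pieces apart under iteration, and is where the real technical content of the argument lies.
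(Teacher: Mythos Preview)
The paper does not give its own proof of this proposition: it is stated as ``a particular case of \cite[Proposition 3.1]{HLMXZ}'' and used as a black box, so there is no in-paper argument to compare your attempt against.

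On its own merits, your outline has a gap that you flag but do not close. Passing to a high power $T^N$ may shrink the shadowing constant, but the periodic orbit $\Gamma$ you produce then has period dividing $kN$, and $G(\Gamma)$ is the minimum distance over \emph{all} pairs in $\Gamma$, not just over the $k$ ``special'' shadows of the $p_{i_j}$. You give no lower bound on the separation of the $k(N-1)$ intermediate points, and your closing remark about ``exploiting the uniform expansion of $T$ to spread nearby orbit pieces apart'' does not supply one: expansion separates points forward in time, so it gives no reason why two forward iterates $T^a y$ and $T^b y$ of the \emph{same} point should be far apart; by pigeonhole the intermediate points can sit within $O(1/(kN))$ of one another and nothing in your construction prevents this. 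A secondary issue is that ``walking orbit segments of length $N$ inside $E$ between consecutive $p_{i_j}$'s'' does not obviously yield a $\delta$-pseudo-orbit with $\delta\ll\eta$: the cycle $i_0\to i_1\to\cdots$ was built from $\sigma$ for $T$, not for $T^N$, so the splice at each $N$-th step is still an $\eta$-jump. The argument in \cite{HLMXZ} avoids the auxiliary power $T^N$ altogether: one closes up a single genuine orbit segment $T^iy,\dots,T^{n-1}y\subset E$ chosen (via a first-close-return argument at a suitable scale) so that the closing error $d(T^iy,T^ny)$ is already much smaller than the minimum pairwise gap of that very segment. The resulting periodic orbit then has the same period as the segment whose gaps are already controlled, and the intermediate-point problem never arises.
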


\subsection{The perturbation result}
Given a point $x$, let $\delta_x$ be the Dirac $\delta$-measure supported on the point $x$. For a self map $S$ and a periodic orbit $\Gamma$ of $S$, denote by
$$\delta_\Gamma=\frac{1}{\#\Gamma}\sum_{x\in\Gamma}\delta_x.$$

\begin{thm}\label{Thm:perturbative}
Let $T$ be a $C^{1,1}$ expanding self-map of $\mathbb{T}$. For any $\varepsilon>0$, there are
\begin{itemize}

\item a periodic orbit $\Gamma_T$ of $T$;

\item an open set ${\mathcal U}_{\varepsilon,T}$ in the $\varepsilon$-neighborhood of $T$ for the $C^{1,1}$-topology;

\end{itemize}
such that for any $S\in{\mathcal U}_{\varepsilon,T}$, $\delta_{\Gamma_S}$ is the unique Lyapunov minimizing measure of $S$.
%
%
\end{thm}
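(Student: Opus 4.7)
The plan is to construct a perturbation $T_0$ of $T$ supported near a well-chosen periodic orbit $\Gamma_T$ so that $\delta_{\Gamma_T}$ becomes the unique Lyapunov minimizing measure of $T_0$, and to upgrade this to an open set using the structural stability Theorem~\ref{Thm:conjugacy}. First apply Lemma~\ref{lem-Mane} to obtain a Lipschitz sub-action $f:\TT\to\RR$ with $F(x):=f(T(x))-f(x)+\log\|DT(x)\|\ge\alpha(T)$ and every Lyapunov minimizing measure of $T$ supported in $E:=\{F=\alpha(T)\}$; let $L$ be a Lipschitz constant of $F$, so $F(x)-\alpha(T)\le L\,d(x,E)$. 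Then apply Proposition~\ref{prop-2} with a constant $C_\star$ chosen very large depending on $L$, $\max_{x\in\TT}\|DT(x)\|$, and $\varepsilon$, to obtain a periodic orbit $\Gamma_T$ of $T$ with $G(\Gamma_T)>C_\star\sum_{x\in\Gamma_T}d(x,E)$; this forces $\sum_{p\in\Gamma_T}d(p,E)$ (and hence $\lambda_T(\delta_{\Gamma_T})-\alpha(T)$) to be negligible compared to the parameters fixed below.

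Fix a small $\eta_0>0$, set $v_0:=\alpha(T)-\eta_0$, and for each $p\in\Gamma_T$ put $\eta_p:=\|DT(p)\|(F(p)-v_0)>0$. In pairwise disjoint balls $B(p,\delta)$ of radius $\delta$ (chosen below the shadowing scale $G(\Gamma_T)/(2\max_{x\in\TT}\|DT(x)\|)$), add a $C^{1,1}$-bump $u_p(x):=-\eta_p\delta\,\phi((x-p)/\delta)$, where $\phi$ is a fixed profile with $\mathrm{supp}\,\phi\subset[-1,1]$, $\phi(0)=0$, $\phi'(0)=1$, and a quadratic gap $1-\phi'(s)\ge c_0 s^2$. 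Setting $T_0:=T+\sum_{p\in\Gamma_T} u_p$, the orbit $\Gamma_T$ remains a periodic orbit of $T_0$ (because $u_p(p)=0$), the distance $d_{C^{1,1}}(T_0,T)=O(\eta_0/\delta)$ can be kept below $\varepsilon/2$ by taking $\eta_0$ small relative to $\delta$, and the modified coboundary $F_0(x):=f(T_0(x))-f(x)+\log\|DT_0(x)\|$ satisfies $F_0(p)=v_0$ on $\Gamma_T$, $F_0=F\ge\alpha(T)=v_0+\eta_0$ outside $\bigcup_p B(p,\delta)$, together with a quadratic lower bound
\[
F_0(x)-v_0\;\ge\;\frac{c_0\,\eta_p\,s^2}{\|DT(p)\|}\;-\;L\delta|s|\;-\;O(\eta_p\delta|s|+\eta_p^2),\qquad s:=(x-p)/\delta,
\]
inside each $B(p,\delta)$, obtained from a first-order expansion of $\zeta:=F_0-F$ and the Lipschitz control of $F$.

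The heart of the proof is to show that $\delta_{\Gamma_T}$ is the unique Lyapunov minimizing measure of $T_0$. The lower bound displayed above shows that the ``dip set'' $D:=\{F_0<v_0\}$ lies inside $\bigcup_p B(p,CL\delta^2/\eta_p)$ for a constant $C$ depending only on $T$, and by choosing $\delta$ small enough $D$ sits strictly inside the shadowing neighborhood $N:=\{x:d(x,\Gamma_T)<G(\Gamma_T)/(2\max_{x\in\TT}\|DT_0(x)\|)\}$. For any $T_0$-invariant $\mu$ one has the energy inequality $\int F_0\,d\mu\ge v_0-\Delta\mu(N)+\eta_0\mu(N^c)$, where $\Delta:=v_0-\min F_0$ is the dip depth, which the parameter choices can be arranged to make arbitrarily small compared to $\eta_0$. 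Any $T_0$-invariant $\mu$ with $\mu(N^c)=0$ is, by Lemma~\ref{shadow} applied to $T_0$, supported on $\Gamma_T$ and hence equals $\delta_{\Gamma_T}$; the remaining case of minimizers with $\mu(N^c)>0$ is the principal technical difficulty, since the naive bound $\mu(N^c)\le\Delta/\eta_0$ only forces $\mu(N^c)$ to be small. Upgrading this to $\mu(N^c)=0$ requires combining the energy bound with the recurrence structure of $T_0$-orbits straddling $N$ and $N^c$ (together with ergodic decomposition), so that every ergodic component of a minimizer is forced to be supported entirely in $N$ and hence equal to $\delta_{\Gamma_T}$; this is exactly where the very large choice of $C_\star$ in Proposition~\ref{prop-2} is needed, to drive $\Delta$ well below $\eta_0$ and to make the dip region fit within the shadowing scale.

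Finally, apply Theorem~\ref{Thm:conjugacy} to $T_0$: for $S$ in a sufficiently small $C^{1,1}$-neighborhood $\cU_{\varepsilon,T}$ of $T_0$, a conjugacy $\pi_S$ near the identity gives $\Gamma_S:=\pi_S(\Gamma_T)$ as a periodic orbit of $S$. Since $F^S(x):=f(S(x))-f(x)+\log\|DS(x)\|$ depends $C^0$-continuously on $S$ in the $C^1$-topology (with $f$ fixed), all the estimates and the uniqueness argument above persist with $\Gamma_T$ replaced by $\Gamma_S$, yielding uniqueness of $\delta_{\Gamma_S}$ as the Lyapunov minimizing measure of $S$.
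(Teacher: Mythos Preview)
Your outline has the right architecture --- Ma\~n\'e's Lemma, Proposition~\ref{prop-2} to select the periodic orbit, a bump perturbation supported near $\Gamma_T$, then Theorem~\ref{Thm:conjugacy} to pass to an open set --- and this matches the paper. However, the step you yourself label ``the principal technical difficulty'' is the entire substance of the proof, and you do not carry it out: writing that one must ``combine the energy bound with the recurrence structure of $T_0$-orbits straddling $N$ and $N^c$'' is not an argument. Your energy inequality yields only $\mu(N^c)\le\Delta/\eta_0$, and no parameter choice drives this to zero, since $\Delta>0$ whenever $F$ is not already constant equal to $v_0$ on $\Gamma_T$. The correct argument is not a measure-level energy bound but a pointwise Birkhoff-sum decomposition along orbits.

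The paper resolves this with a block argument that your sketch is missing. One first fixes a large integer $L_\varepsilon$ and then a small scale $\rho_\varepsilon$ with $\rho_\varepsilon K^{L_\varepsilon}$ still below the shadowing threshold (here $K$ bounds the expansion from above); only afterwards is $C_\varepsilon$ chosen and Proposition~\ref{prop-2} applied. The bump is supported in the $\rho_\varepsilon G(\Gamma_T)$--tube around $\Gamma_T$, with the dip at every periodic point made \emph{equal} to the same constant proportional to $\rho_\varepsilon G(\Gamma_T)$. Writing ${\widetilde F}_S:=F_S-\int F_S\,d\delta_{\Gamma_S}$, one proves: (i) ${\widetilde F}_S$ is uniformly positive on $\mathcal{F}_T:=\{d(\,\cdot\,,\Gamma_T)>\rho_\varepsilon G(\Gamma_T)\}$; and (ii) for each $x\notin\mathcal{F}_T\cup\Gamma_S$ there exists $N(x)$ with $\sum_{i=0}^{N(x)-1}{\widetilde F}_S(S^ix)>0$. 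Part~(ii) is the crux: the orbit of $x$ first shadows $\Gamma_S$, and the possibly negative contribution from this phase is controlled by comparing with the periodic orbit via the Lipschitz bound on $F_S$ and a geometric series, together with a bound on partial periodic sums; once the orbit drifts outside the $\rho_\varepsilon G^*$--tube, the \emph{upper} bound $K$ on the expansion forces it to remain for at least $L_\varepsilon$ further iterates in the annulus where ${\widetilde F}_S$ is uniformly positive before it can escape the shadowing neighborhood. Choosing $L_\varepsilon$ large makes this positive tail dominate. With (i)--(ii), the orbit of a generic point for any ergodic $\mu\neq\delta_{\Gamma_S}$ is cut into blocks each having positive Birkhoff sum, and one concludes $\int{\widetilde F}_S\,d\mu>0$. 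This block mechanism, and in particular the pair of parameters $(L_\varepsilon,\rho_\varepsilon)$ chosen \emph{before} the periodic orbit, is absent from your proposal.
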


Theorem~\ref{Thm:main-lyapunov-Holder} can be deduced from Theorem~\ref{Thm:perturbative} directly. \qed

One has the following more precise version of Theorem~\ref{Thm:perturbative}.

\begin{thm}\label{Thm:perturbative-extended}
Let $T$ be a $C^{1,1}$ expanding self-map of $\mathbb{T}$. For any $\varepsilon>0$, there are
\begin{itemize}

\item a periodic orbit $\Gamma_T$ of $T$;

\item a map $h:\TT\to\RR$ satisfying $h$ is supported in a small neighborhood of $\Gamma_T$, $\|h\|_{C^{1,1}}<\varepsilon/2$, and $S_0=T+h$ can be regarded\footnote{One can do this perturbation for the lift of $T$ from $\RR$ to $\RR$ and then pull the perturbation back to $\TT^1$.} as an $\varepsilon/2$-perturbation of $T$ for the $C^{1,1}$-topology;

\item a neighborhood ${\mathcal U}_{\varepsilon,T}$ of $S_0$ such that ${\mathcal U}_{\varepsilon,T}$ is contained in the $\varepsilon$-neighborhood of $T$;

\end{itemize}
such that for any $S\in{\mathcal U}_{\varepsilon,T}$, $\delta_{\Gamma_S}$ is the unique Lyapunov minimizing measure of $S$.%
%
\end{thm}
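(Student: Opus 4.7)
The plan is to combine the Ma\~n\'e sub-action from Lemma~\ref{lem-Mane}, a large-gap periodic orbit furnished by Proposition~\ref{prop-2}, a small localized $C^{1,1}$-perturbation, and the expansive-like Lemma~\ref{shadow}, to pin down a periodic orbit as the unique support of Lyapunov minimizing measures for $S_0$ and then extend this to a neighborhood of $S_0$. First I would invoke Lemma~\ref{lem-Mane} to obtain a Lipschitz $f:\TT\to\RR$ with $F(x):=f(Tx)-f(x)+\log|DT(x)|\ge\alpha(T)$ everywhere and equality on the compact invariant set $E:=\bigcup_{\nu\in\cL^-(T)}\mathrm{supp}(\nu)$. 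Writing $L$ for a common Lipschitz bound on $f$, $F$, and $\log|DT|$, and $M:=\max|DT|$, I would then apply Proposition~\ref{prop-2} with a constant $C_0=C_0(\varepsilon,L,M)$ chosen very large (to be fixed by the estimates below) to obtain a periodic orbit $\Gamma_T=\{x_1,\dots,x_n\}$ of $T$ with $G(\Gamma_T)>C_0\sum_i d(x_i,E)$; in particular $\gamma_i:=F(x_i)-\alpha(T)\le L\,d(x_i,E)$ is arbitrarily small compared with $G(\Gamma_T)$, providing the key scale separation.

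Second, I would fix a small gap $\eta>0$, set $c:=\alpha(T)-\eta$ and, for each $i$, $\delta_i:=|DT(x_i)|(1-e^{-\eta-\gamma_i})$, fix a smooth bump $\phi$ with $\phi(0)=1$ and $\mathrm{supp}\,\phi\subset(-1,1)$, and pick a scale $\rho<G(\Gamma_T)/(2M)$ so that the intervals $B(x_i,\rho)$ are pairwise disjoint and Lemma~\ref{shadow} will apply with radius $\rho$. Define
\[
h(x):=\sum_{i=1}^n -\delta_i\,(x-x_i)\,\phi\bigl((x-x_i)/\rho\bigr),
\]
so $h(x_i)=0$, $Dh(x_i)=-\delta_i$, and a direct bump estimate gives $\|h\|_{C^{1,1}}\le K\max_i\delta_i/\rho$ for a universal $K$; balancing $\rho$ and $\eta$ (using $\delta_i=O(M(\eta+\gamma_i))$) arranges $\|h\|_{C^{1,1}}<\varepsilon/2$. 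Setting $S_0:=T+h$ yields the desired $\varepsilon/2$-perturbation of $T$ supported in a neighborhood of $\Gamma_T$, and $\Gamma_T$ remains an $S_0$-periodic orbit since each $h(x_i)=0$.

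Third, I would verify uniqueness for $S_0$ and propagate it to a neighborhood. Setting $F_0(x):=f(S_0 x)-f(x)+\log|DS_0(x)|$, telescoping gives $\lambda_{S_0}(\nu)=\int F_0\,d\nu$ for every $S_0$-invariant $\nu$. The three key estimates are: (a) outside $\bigcup_i B(x_i,\rho)$, $F_0=F\ge c+\eta$; (b) at each $x_i$, $F_0(x_i)=c$ by the choice of $\delta_i$, yielding $\lambda_{S_0}(\delta_{\Gamma_T})=c$; (c) inside each ball, the Lipschitz bounds on $f$, $F$, $\log|DT|$ together with the smallness of $\gamma_i$ give $F_0(y)\ge c-\kappa$ with a \emph{dip} $\kappa$ that can be made arbitrarily small compared to $\eta$ by choosing $C_0$ large and $\rho,\eta$ correspondingly small. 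Combining these, $\int F_0\,d\nu\ge c+\eta\,\nu(\mathrm{out})-\kappa\,\nu(\mathrm{in})$ for every invariant $\nu$, which forces any $\nu\in\cL^-(S_0)$ to concentrate almost entirely in $\bigcup_i B(x_i,\rho)$; Lemma~\ref{shadow} then forces $\mathrm{supp}(\nu)\subset\Gamma_T$, so $\nu=\delta_{\Gamma_T}$. For $S$ in a small $C^{1,1}$-neighborhood of $S_0$, Theorem~\ref{Thm:conjugacy} provides a conjugacy $\pi_S$ and the periodic orbit $\Gamma_S:=\pi_S(\Gamma_T)$ of $S$; the $C^0$-continuity of $S\mapsto F^{(S)}:=f\circ S-f+\log|DS|$ propagates the strict inequalities of (a)--(c) with $\Gamma_T$ replaced by $\Gamma_S$, yielding the required open neighborhood $\cU_{\varepsilon,T}$.

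The hardest part will be the quantitative dip estimate (c): the bump $h$ depresses $\log|DS_0|$ throughout its support, not only at the centers $x_i$, which threatens to create spurious subcritical values of $F_0$ in the balls that could host competing invariant measures. The resolution relies crucially on Proposition~\ref{prop-2}: by taking $C_0$ arbitrarily large, the intrinsic error $\gamma_i$ (and hence all Lipschitz losses accumulated across the bump region) becomes arbitrarily small relative to the designed gap $\eta$, so that the strict gain $\eta$ from leaving the balls dominates the interior dip $\kappa$ and closes the estimates.
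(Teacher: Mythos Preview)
Your outline assembles the right ingredients (Ma\~n\'e sub-action, large-gap orbit from Proposition~\ref{prop-2}, localized $C^{1,1}$ perturbation, structural stability), but the uniqueness argument in the third paragraph has a genuine gap.

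The failure is at ``$\int F_0\,d\nu\ge c+\eta\,\nu(\mathrm{out})-\kappa\,\nu(\mathrm{in})$ \dots\ forces any $\nu\in\cL^-(S_0)$ to concentrate almost entirely in $\bigcup_i B(x_i,\rho)$; Lemma~\ref{shadow} then forces $\mathrm{supp}(\nu)\subset\Gamma_T$''. Even granting $\kappa\ll\eta$, your inequality only yields $\nu(\mathrm{out})\le\kappa/\eta$ for a minimizer, not $\nu(\mathrm{out})=0$; so Lemma~\ref{shadow} (which concerns orbits that remain \emph{forever} in a small tube) is inapplicable. If instead you try to show $\int F_0\,d\nu>c$ directly for every ergodic $\nu\neq\delta_{\Gamma_T}$, you would need $\nu(\mathrm{out})>\kappa/(\eta+\kappa)$; but an expanding map has invariant measures (e.g.\ long periodic orbits shadowing $\Gamma_T$) with $\nu(\mathrm{out})$ arbitrarily small, so no fixed choice of $\kappa/\eta$ suffices. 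The bump depresses $\log|DS_0|$ throughout its support, so $\{F_0<c\}$ is a genuine open neighborhood of $\Gamma_T$, and a pointwise lower bound cannot separate $\delta_{\Gamma_T}$ from competitors that spend an overwhelming fraction of their time there.

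The paper replaces the pointwise bound inside the balls by a \emph{Birkhoff-sum} argument. One works with $\widetilde F_S:=F_S-\int F_S\,d\delta_{\Gamma_S}$, which integrates to zero along $\Gamma_S$. For a generic point $x$ of any ergodic $\nu\neq\delta_{\Gamma_S}$, the orbit is cut into blocks. On a block that starts in the tube around $\Gamma_S$, the orbit is shadowed by the periodic orbit, and the partial sum $\sum_i\widetilde F_S(S^ix)$ differs from the (vanishing) periodic sum by at most $\mathrm{Lip}(\widetilde F_S)\sum_i d(S^ix,\Gamma_S)$, a geometric series bounded \emph{independently of the block length}; an extra bounded error comes from the incomplete final period of $\Gamma_S$. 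By Lemma~\ref{shadow} every such block ends in an escape, after which (by the choice of scales $\rho_\varepsilon,L_\varepsilon$) the orbit spends at least $L_\varepsilon$ steps in the region where $\widetilde F_S$ is uniformly positive. Taking $L_\varepsilon$ large makes this positive contribution dominate the two bounded errors, giving $\sum_{i=0}^{N(x)-1}\widetilde F_S(S^ix)>0$ on each block and hence $\int\widetilde F_S\,d\nu>0$. All constants are fixed uniformly for $S$ in a $C^{1,1}$-ball around $S_0$, which is how openness is obtained; your ``$C^0$-continuity propagates the strict inequalities'' runs into the same non-uniformity problem and would also need this refinement.
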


\medskip

Based on the above Lipschitz-$C^1$ version, one has the following differentiable $C^2$ version.
\begin{thm}\label{Thm:perturbative-smooth}
Let $T$ be a $C^{2}$ expanding self-map of $\mathbb{T}$. For any $\varepsilon>0$, there are
\begin{itemize}

\item a periodic orbit $\Gamma_T$ of $T$;

\item an open set ${\mathcal U}_{\varepsilon,T}$ in the $\varepsilon$-neighborhood of $T$ for the $C^{2}$-topology;

\end{itemize}
such that for any $S\in{\mathcal U}_{\varepsilon,T}$, $\delta_{\Gamma_S}$ is the unique Lyapunov minimizing measure of $S$.
%
%
\end{thm}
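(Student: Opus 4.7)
The plan is to reduce Theorem~\ref{Thm:perturbative-smooth} to the $C^{1,1}$-version Theorem~\ref{Thm:perturbative-extended} by exploiting that the $C^2$-topology is finer than the $C^{1,1}$-topology on $\cE^2(\TT)$. More precisely, for $S,T\in\cE^2(\TT)$, the Lipschitz constant of $DS-DT$ is bounded by $\|D^2S-D^2T\|_{C^0}$, so
\[
 d_{C^{1,1}}(S,T)\le 2\, d_{C^2}(S,T).
\]
Hence the inclusion $(\cE^2(\TT),d_{C^2})\hookrightarrow(\cE^{1,1}(\TT),d_{C^{1,1}})$ is continuous, every $C^{1,1}$-open set meets $\cE^2(\TT)$ in a $C^2$-open set, and the $C^2$-ball of radius $\varepsilon/2$ around $T$ is contained in the $C^{1,1}$-ball of radius $\varepsilon$.

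First, I would apply Theorem~\ref{Thm:perturbative-extended} with parameter $\varepsilon':=\varepsilon/4$ to the $C^2$ expanding map $T\in\cE^{1,1}(\TT)$. This produces a periodic orbit $\Gamma_T$, a perturbation $h$ supported in a small neighborhood of $\Gamma_T$ with $\|h\|_{C^{1,1}}<\varepsilon'/2$, and a $C^{1,1}$-open neighborhood ${\mathcal V}$ of $S_0:=T+h$, itself contained in the $C^{1,1}$-$\varepsilon'$-ball around $T$. The key observation is that in the proof of Theorem~\ref{Thm:perturbative-extended} the perturbation $h$ is constructed by adding localized bump functions near the orbit points of $\Gamma_T$; choosing those bumps $C^\infty$, the resulting $h$ lies in $C^\infty(\TT)$, and $\|D^2 h\|_{C^0}$ is of the same order as the Lipschitz constant of $Dh$ used to estimate $\|h\|_{C^{1,1}}$. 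After a harmless adjustment of constants we obtain $\|h\|_{C^2}<\varepsilon/2$, so $S_0\in\cE^2(\TT)$ and $d_{C^2}(S_0,T)<\varepsilon/2$.

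Second, I would set
\[
 {\mathcal U}_{\varepsilon,T}:={\mathcal V}\cap\cE^2(\TT)\cap B_{C^2}(T,\varepsilon),
\]
where $B_{C^2}(T,\varepsilon)$ denotes the open $C^2$-$\varepsilon$-ball around $T$. By the first paragraph this set is $C^2$-open in $\cE^2(\TT)$ and contained in the $C^2$-$\varepsilon$-neighborhood of $T$, and it is nonempty because it contains $S_0$. For every $S\in{\mathcal U}_{\varepsilon,T}$ we have $S\in{\mathcal V}$, so Theorem~\ref{Thm:perturbative-extended} yields that $\delta_{\Gamma_S}$ is the unique Lyapunov minimizing measure of $S$, as required.

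The main obstacle is ensuring that the perturbation $h$ obtained from Theorem~\ref{Thm:perturbative-extended} can be arranged to be $C^2$ (indeed $C^\infty$) with a $C^2$-norm controlled by the original $C^{1,1}$-norm; mollification is not automatically available because the $C^{1,1}$-distance involves the Lipschitz constant of $Dh$, which is not continuous under convolution. The cleanest route is to inspect the bump-function construction in the earlier proof and to replace any non-smooth bump by a smooth one. Should that inspection not be transparent, a fallback is to mollify $h$ at a scale $\delta\ll G(\Gamma_T)$: the mollification $\tilde h$ stays supported in a slightly enlarged neighborhood of $\Gamma_T$ and satisfies $\|\tilde h\|_{C^{1,1}}\le\|h\|_{C^{1,1}}$, and the structural stability arguments behind ${\mathcal V}$ (via Lemma~\ref{shadow} and Lemma~\ref{lem-Mane}) are robust enough under $C^1$-small perturbations that the defining property of ${\mathcal V}$ persists for $T+\tilde h$ when $\delta$ is small.
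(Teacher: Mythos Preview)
Your approach is essentially the paper's: reduce to Theorem~\ref{Thm:perturbative-extended} via the continuous inclusion $(\cE^2,d_{C^2})\hookrightarrow(\cE^{1,1},d_{C^{1,1}})$ and then smooth the perturbation $h$. The paper takes your ``fallback'' route (mollification) rather than replacing the bumps, and the computation that dissolves your worry about controlling the $C^2$-norm is the identity
\[
D^2h_\delta(x)=\frac{1}{2\delta}\bigl(Dh(x+\delta)-Dh(x-\delta)\bigr),
\]
which gives $\|D^2h_\delta\|_{C^0}\le{\rm Lip}(Dh)<\varepsilon/2$ immediately; hence $S_\delta=T+h_\delta$ is a $C^2$-map with $d_{C^2}(S_\delta,T)<\varepsilon/2$. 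Two remarks on the details. First, your primary suggestion (swap in $C^\infty$ bumps) is less immediate than you imply: the explicit piecewise-cubic $h$ in the proof of Theorem~\ref{Thm:perturbative-extended} is calibrated so that $Dh(p_i)$ satisfies the integral identity~\eqref{f.integral}, and replacing it by a generic smooth bump would force you to redo those estimates. Second, your closing observation is exactly on point and is needed here: the open set $\mathcal V$ in Theorem~\ref{Thm:perturbative-extended} really only uses $d_{C^1}(S,S_0)<\widetilde\varepsilon$ (for the conjugacy $\pi_S$) together with $d_{C^{1,1}}(S,T)<\varepsilon$ (for the Lipschitz bounds on $F_S$); since $h_\delta\to h$ in $C^1$ while $\|h_\delta\|_{C^{1,1}}\le\|h\|_{C^{1,1}}$, the map $S_\delta$ and its small $C^2$-neighborhood lie in that set even though $h_\delta$ need not converge to $h$ in $C^{1,1}$.
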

Theorem~\ref{Thm:main-lyapunov} can be deduced from Theorem~\ref{Thm:perturbative-smooth} directly. \qed

\subsection{The proof of Theorem~\ref{Thm:perturbative-extended}}
This subsection is devoted to the proof of Theorem~\ref{Thm:perturbative-extended}. So now we are under the assumptions of Theorem~\ref{Thm:perturbative-extended}.

Recall that when $X,Y$ are two metric spaces, $f:X\to Y$ is a map, the Lipschitz constant of $f$ is defined to be
$${\rm Lip}(f)=\sup_{x_1,x_2\in X, x_1\neq x_2}\frac{d_Y(f(x_1),f(x_2))}{d_X(x_1,x_2)}.$$

\begin{proof} The proof can be divided into several steps. Up to changing the metric on $\TT$, without loss of generality, one can assume that $\|DT(x)\|>1$ for any $x\in\TT$.
\paragraph{The cohomological equation.} By Lemma \ref{lem-Mane} there exists  $f\in {\rm Lip}(\mathbb{T},\RR)$ such that \eqref{AA} holds for
	$$F_T(x)=f(T(x))-f(x)+\log\|DT(x)\|.$$
	By Lemma \ref{lem-Mane}, one has that
\begin{align}\label{P-getmeasure}
F_T(x)\ge \alpha(T)~\forall x\in\mathbb{T},
\text{ and }F_T|_{{\rm supp}(\mu)}=\alpha(T),~\forall \mu\in\cL^-(T).
\end{align}

For any other self-map $S$, denote by
 $$F_S(x)=f(S(x))-f(x)+\log\|DS(x)\|.$$

\paragraph{Fix constants.} We fix a constant $K$ independent of the perturbation such that
\begin{itemize}
\item $K>\max\{2\max_{x\in\TT}\|DT(x)\|$,10\}.
\item $K>{\rm Lip}(f)\cdot (\max_{x\in\TT}\|DT(x)\|+1)+{\rm Lip}(DT)>2{\rm Lip}(f)$.
	\item $K>\lambda/(\lambda-1)$, where $\lambda=\inf_{x\in\TT}\|DT(x)\|$.

\end{itemize}


\paragraph{Reduce $\varepsilon$.} By reducing $\varepsilon$ if necessary, one has that for any $S$ satisfying $d_{C^{1,1}}(S,T)<\varepsilon$, one has that $S$ is still an expanding self-map with $\|DS(x)\|>1$ for any $x\in\TT$, and we have
\begin{enumerate}
\item \begin{align}\label{f.achoiceof-K}
\|DS(x)\|>\frac{\|DT(x)\|+1}{2}>1>2/K,~~~\forall x\in\TT.
\end{align}

\item \begin{align}\label{f.uniform-neighborhood}
	K>2\max_{x\in\TT}\|DS(x)\|,~~~K>\min_{x\in\TT}\frac{\|DS(x)\|}{\|DS(x)\|-1}.
	\end{align}
\item \begin{align}\label{f.K-control-Lipschitz}
K>{\rm Lip}(f)\cdot (\max_{x\in\TT}\|DS(x)\|+1)+{\rm Lip}(DS).
         \end{align}

\end{enumerate}


\paragraph{A small constant $\rho_\varepsilon$ and big constants $L_\varepsilon$, $C_\varepsilon$.}
		Fix $L_\varepsilon\in\mathbb{N}$ such that
\begin{align}\label{f.choose-L}
L_\varepsilon\cdot \varepsilon >4K^6.
\end{align}
Fix  $\rho_\varepsilon>0$ sufficiently small such that
\begin{align}\label{f.choose-rho}
\rho_\varepsilon \cdot K^{L_\varepsilon}<\frac{1}{2K}.
\end{align}
Fix $C_\varepsilon$ sufficiently large such that
\begin{align}\label{f.choose-C-second}
\varepsilon\cdot \rho_\varepsilon \cdot C_\varepsilon>6K^5.
\end{align}
Now one has the following estimate.
\begin{Claim}
\begin{align}\label{f.relationship-constants}
\frac{L_\varepsilon\cdot\varepsilon\cdot\rho_\varepsilon\cdot C_\varepsilon}{K^4}>3K+\rho_\varepsilon\cdot C_\varepsilon\cdot K^2.
\end{align}
\end{Claim}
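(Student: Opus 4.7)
The claim is a purely arithmetic inequality among the parameters $K, L_\varepsilon, \varepsilon, \rho_\varepsilon, C_\varepsilon$. My plan is to estimate the left-hand side from below using \eqref{f.choose-L}, absorb the troublesome term $\rho_\varepsilon C_\varepsilon K^2$ into this bound, and then discharge the remaining additive $3K$ via \eqref{f.choose-C-second} together with the size assumption $K>10$.

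Concretely, first I would rewrite the target as
\[
\rho_\varepsilon\,C_\varepsilon\left(\frac{L_\varepsilon\,\varepsilon}{K^4}-K^2\right)>3K.
\]
By \eqref{f.choose-L}, $L_\varepsilon\varepsilon>4K^6$, so $\dfrac{L_\varepsilon\varepsilon}{K^4}>4K^2$, and therefore
\[
\frac{L_\varepsilon\,\varepsilon}{K^4}-K^2>3K^2.
\]
Thus it suffices to show $3K^2\rho_\varepsilon C_\varepsilon>3K$, i.e.\ $K\,\rho_\varepsilon\,C_\varepsilon>1$.

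For this last step, I would use \eqref{f.choose-C-second}, namely $\varepsilon\rho_\varepsilon C_\varepsilon>6K^5$. After the reduction of $\varepsilon$ in the preceding paragraphs we may assume $\varepsilon\le 1$ (no constant in the statement depends on $\varepsilon$ being any specific positive value, only on its smallness), so $\rho_\varepsilon C_\varepsilon\ge \varepsilon\rho_\varepsilon C_\varepsilon>6K^5$. Multiplying by $K$ gives $K\rho_\varepsilon C_\varepsilon>6K^6>1$, using $K>10$ from the \emph{Fix constants} step. Chaining these inequalities yields the claim.

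There is no real obstacle here; it is an order-of-magnitude bookkeeping check to certify that the constants chosen in \eqref{f.choose-L}, \eqref{f.choose-rho}, \eqref{f.choose-C-second} are mutually compatible. The only subtlety is making sure that the auxiliary assumption $\varepsilon\le 1$ is legitimate, which follows from the fact that $\varepsilon$ was already reduced to ensure several $C^{1,1}$-closeness properties and may be reduced further at will. Note that \eqref{f.choose-rho} is not invoked in this particular claim; it will be used in subsequent estimates.
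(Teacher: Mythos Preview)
Your proof is correct and follows essentially the same elementary bookkeeping as the paper. The only difference is that the paper splits the left-hand side into two halves and uses $L_\varepsilon\in\mathbb{N}$ (hence $L_\varepsilon\ge 1$) together with \eqref{f.choose-C-second} to get $\dfrac{L_\varepsilon\,\varepsilon\,\rho_\varepsilon\,C_\varepsilon}{2K^4}>3K$ directly, thereby avoiding your auxiliary assumption $\varepsilon\le 1$; your justification of that assumption via the earlier reduction of $\varepsilon$ is nonetheless valid.
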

\begin{proof}
By \eqref{f.choose-L}, one has that
$$\frac{L_\varepsilon\cdot\varepsilon\cdot\rho_\varepsilon\cdot C_\varepsilon}{2K^4}>\rho_\varepsilon\cdot C_\varepsilon\cdot K^2.$$
By \eqref{f.choose-C-second}, one has that
$$\frac{L_\varepsilon\cdot\varepsilon\cdot\rho_\varepsilon\cdot C_\varepsilon}{2K^4}>3 K.$$
Combining the above inequalities together, one can conclude.
\end{proof}

\paragraph{Periodic orbits with large inner distance; constants $G^*$ and $d_*$.}
Fix $\mu_T\in\cL^-(T)$. Consider $E={\rm supp}(\mu_T)$.
 By Proposition~\ref{prop-2},  there is a periodic orbit $\Gamma_T$ of $T$ such that
\begin{align}\label{f.important-estimate}
G(\Gamma_T)>C_\varepsilon \cdot\sum_{x\in\Gamma_T}d(x,{\rm supp}(\mu_T)).
\end{align}
Note that by the definition, this is still valid when $E$ is a fixed point: in this case $d(x,{\rm supp}(\mu_T))=0$, but $G(\Gamma)>0$.
By the Lipschitz property of $F_T$ and the choice of $K$, one has that for any $x\in\Gamma_T$,
\begin{align}\label{f.estimate-after-cohomology}
\left|F_T(x)-\alpha(T)\right|\le K\cdot d(x,{\rm supp}(\mu_T)).
\end{align}
Denote by $G^*=G(\Gamma_T)$ and $d_*=\sum_{x\in\Gamma_T}d(x,{\rm supp}(\mu_T))$. The inequality \eqref{f.important-estimate} can be read as $G^*>C_\varepsilon d_*$.

\paragraph{The perturbation map $h$.} Assume that  $\Gamma_T=\{p_1,p_2,\cdots,p_{\tau(p)}\}$. In a local chart, one defines the perturbations in the following way.

Since $K>\max_{x\in\TT}\|DT(x)\|$, one has that for any $x\in\TT$, $1/\|DT(x)\|>1/K$. Thus, for any $\gamma\in[0,1]$, one has that
$$\frac{1}{\|DT(x)\|-\gamma\cdot\varepsilon\cdot\rho_\varepsilon\cdot G^*/(2K)}>1/K.$$

{Consequently, we choose $\gamma_i\in[0,1]$ such that if we are in the interval $[\|DT(p_i)\|-\gamma_i\cdot\varepsilon\cdot\rho_\varepsilon \cdot G^*/K^3,\|DT(p_i)\|]$, one has that
\begin{align}\label{f.integral}
		\int_{\|DT(p_i)\|-\gamma_i\cdot\varepsilon\cdot\rho_\varepsilon \cdot G^*/(2K)}^{\|DT(p_i)\|}\frac{1}{z}{d}z=\varepsilon\cdot \rho_\varepsilon\cdot G^*/K^4.
		\end{align}}

Define a real valued function $h(x)$ on $\mathbb{T}$, such that in local charts, one has the following expression:
	\[
h(x)= \left\{
	\begin{array}{ll}
-\frac{\varepsilon}{2K \cdot (\rho_\varepsilon \cdot G^*)}(x-p_i)(p_i+\rho_\varepsilon\cdot G^*-x)^{2}\cdot\gamma_i,\ \ &\text{ if } x\in(p_i,p_{i}+\rho_\varepsilon\cdot G^*),\\
0,&\text{ if } x=p_i,\\
-\frac{\varepsilon}{2K\cdot (\rho_\varepsilon \cdot G^*)}(x-p_i)(x-p_i+\rho_\varepsilon\cdot G^*)^{2}\cdot\gamma_i,\ \ &\text{ if } x\in(p_{i}-\rho_\varepsilon\cdot G^*,p_{i}),\\	
0,& \text{ others}.
	\end{array}
	\right.
	\]


\begin{lem}\label{Lem:estimate-h}
$h$ has the following properties:
\begin{enumerate}
\item\label{i.tobezero} $h(p_i)=0$, $h(p_i\pm \rho_\varepsilon\cdot G^*)=0$ and $Dh(p_i\pm \rho_\varepsilon\cdot G^*)=0$.

\item $Dh(p_i)=-\gamma_i\cdot\varepsilon\cdot\rho_\varepsilon\cdot G^*/(2K)$.

\item $\|h\|_{C^1}<\varepsilon/2$, ${\rm Lip}(Dh)<\varepsilon/2$.
\end{enumerate}
\end{lem}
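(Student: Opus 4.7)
The plan is to reduce everything to direct calculations on the explicit polynomial expression of $h$ inside the support of one bump, centered at a single $p_i$. First I would observe that the intervals $[p_i-\rho_\varepsilon G^*, p_i+\rho_\varepsilon G^*]$ are pairwise disjoint: by definition of the gap $G^*=G(\Gamma_T)$, distinct points of $\Gamma_T$ are at distance $\ge G^*$, while \eqref{f.choose-rho} combined with $K\ge 10$ forces $\rho_\varepsilon<1/2$. Hence $h$ is genuinely defined piece by piece and each bump can be analyzed independently, with $0$ in between.

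For items (1) and (2), introduce the local coordinate $u=x-p_i$ and work on the right half $u\in(0,\rho_\varepsilon G^*)$, where
$$h(x)=-\frac{\varepsilon\gamma_i}{2K\rho_\varepsilon G^*}\,u\,(\rho_\varepsilon G^*-u)^{2}.$$
The values $u=0$ and $u=\rho_\varepsilon G^*$ annihilate the polynomial, giving $h(p_i)=h(p_i+\rho_\varepsilon G^*)=0$. A direct differentiation yields
$$Dh(x)=-\frac{\varepsilon\gamma_i}{2K\rho_\varepsilon G^*}(\rho_\varepsilon G^*-u)(\rho_\varepsilon G^*-3u),$$
so $Dh(p_i+\rho_\varepsilon G^*)=0$ and $Dh(p_i)=-\varepsilon\gamma_i\rho_\varepsilon G^*/(2K)$, which is exactly the value matching \eqref{f.integral} by the fundamental theorem of calculus. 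The left half is handled identically by reflection through $p_i$, giving the same boundary behavior and the same one-sided derivative at $p_i$, so $h\in C^1(\TT,\RR)$ and items (1), (2) follow.

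For item (3), I would compute uniform bounds on $|h|$, $|Dh|$ and $|D^2h|$ on each bump. The cubic $u\mapsto u(\rho_\varepsilon G^*-u)^{2}$ attains its maximum $\tfrac{4}{27}(\rho_\varepsilon G^*)^{3}$ at $u=\rho_\varepsilon G^*/3$, giving
$$\|h\|_{C^{0}}\le\frac{2\varepsilon(\rho_\varepsilon G^*)^{2}}{27K}.$$
The quadratic factor $(\rho_\varepsilon G^*-u)(\rho_\varepsilon G^*-3u)$ is bounded in absolute value by $(\rho_\varepsilon G^*)^{2}$ on $[0,\rho_\varepsilon G^*]$, so
$$\|Dh\|_{C^{0}}\le\frac{\varepsilon\rho_\varepsilon G^*}{2K}.$$
Finally $D^{2}h(x)=-\frac{\varepsilon\gamma_i}{2K\rho_\varepsilon G^*}(6u-4\rho_\varepsilon G^*)$ is bounded by $2\varepsilon/K$ in absolute value. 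Since $\rho_\varepsilon G^*<1$, $\gamma_i\in[0,1]$, and $K>10$, each bound is strictly below $\varepsilon/2$.

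The only mildly nontrivial point, and what I regard as the main (though still very minor) obstacle, is ensuring that $Dh$ is Lipschitz across the boundaries $x=p_i\pm\rho_\varepsilon G^*$ where the bump meets the zero region. This is where item (1) pays off: because both $h$ and $Dh$ vanish at these endpoints, extending by zero preserves both $C^0$ continuity of $h$ and $C^0$ continuity of $Dh$, and the global Lipschitz constant of $Dh$ is just the maximum of the interior bound $2\varepsilon/K$ and $0$. Combined with the disjointness of supports, this upgrades the pointwise bound on $|D^{2}h|$ to a global Lipschitz bound on $Dh$, completing item (3).
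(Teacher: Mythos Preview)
Your proof is correct and follows essentially the same route as the paper: direct computation of $h$, $Dh$, and $D^2h$ on a single bump in local coordinates, followed by elementary bounds. Your treatment is in fact a bit more thorough---you explicitly justify the disjointness of the supports via \eqref{f.choose-rho}, factor $Dh$ cleanly as $-c(\rho_\varepsilon G^*-u)(\rho_\varepsilon G^*-3u)$, locate exact maxima, and spell out why $Dh$ remains globally Lipschitz across the endpoints---whereas the paper leaves these points implicit.
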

\begin{proof}
Without loss of generality, one can assume that $p_i=0$ in a local chart. One has the following calculation:
	\[
Dh(x)= \left\{
	\begin{array}{ll}
-\frac{\varepsilon}{2K \cdot (\rho_\varepsilon \cdot G^*)}\big[(\rho_\varepsilon\cdot G^*-x)^{2}-2x(\rho_\varepsilon\cdot G^*-x)\big]\cdot\gamma_i,\ \ &\text{ if } x\in(0,\rho_\varepsilon\cdot G^*),\\
-\gamma_i\cdot\varepsilon\cdot\rho_\varepsilon\cdot G^*/(2K),&\text{ if } x=0,\\
-\frac{\varepsilon}{2K\cdot (\rho_\varepsilon \cdot G^*)}\big[(x+\rho_\varepsilon\cdot G^*)^{2}+2x(\rho_\varepsilon\cdot G^*+x)\big]\cdot\gamma_i,\ \ &\text{ if } x\in(-\rho_\varepsilon\cdot G^*,0),\\	
0,& \text{ others}.
	\end{array}
	\right.
	\]
From the expression, one knows that $h(0)=0$, $h(\pm \rho_\varepsilon\cdot G^*)=0$, $Dh(\pm \rho_\varepsilon\cdot G^*)=0$ and $Dh(0)=-\gamma_i\cdot\varepsilon\cdot\rho_\varepsilon\cdot G^*/(2K)$. From the expression of $h$, one knows that $\|h\|_{C_0}<\varepsilon/2$. By a simple calculation, one has that the the whole interval,
$$\|Dh\|\le \frac{\varepsilon}{2K \cdot (\rho_\varepsilon \cdot G^*)}2(\rho_\varepsilon\cdot G^*)^2<\varepsilon/2.$$
One calculates the second derivative of $h$, which are not well-defined on $\pm\rho_\varepsilon\cdot G^*$:
	\[
D^2h(x)= \left\{
	\begin{array}{ll}
-\frac{\varepsilon}{2K \cdot (\rho_\varepsilon \cdot G^*)}\big[6x-4\rho_\varepsilon\cdot G^*\big]\cdot\gamma_i,\ \ &\text{ if } x\in(0,\rho_\varepsilon\cdot G^*),\\
\textrm{not well defined},&\text{ if } x=0,\\
-\frac{\varepsilon}{2K\cdot (\rho_\varepsilon \cdot G^*)}\big[6x+4\rho_\varepsilon\cdot G^*\big]\cdot\gamma_i,\ \ &\text{ if } x\in(-\rho_\varepsilon\cdot G^*,0),\\	
0,& \text{ others except } \pm\rho_\varepsilon\cdot G^*.
	\end{array}
	\right.
	\]

On each interval, one has that $\|D^2h\|<\varepsilon/2$ from the expression. Thus, one knows that ${\rm Lip}(Dh)<\varepsilon/2$.
\end{proof}

\paragraph{The perturbation $S_0$.}
Note that when we work in local charts, we can write
	$$S_0(x)=T(x)+h(x).$$
It is clear that when $\varepsilon$ is small enough, $S_0$ is an expanding self-map.
\begin{lem}$S_0$ has the following properties:
\begin{itemize}
\item $d_{C^{1,1}}(S_0,T)<\varepsilon/2$.
\item $DS_0(p_i)=DT(p_i)-\gamma_i\cdot\varepsilon\cdot\rho_\varepsilon\cdot G^*/(2K)$ for each $p_i\in\Gamma_T$.
\item $\Gamma_T$ is still the periodic orbit of $S_0$, and $T|_{\Gamma_T}=S_0|_{\Gamma_T}$.
\end{itemize}
\end{lem}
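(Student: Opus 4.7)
The lemma is a routine verification that assembles the properties of $h$ from Lemma~\ref{Lem:estimate-h} with the tautology $S_0=T+h$, and the plan is to treat the three bullets separately, starting from the easiest.

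The last two bullets are immediate consequences of the previous lemma. For the third bullet, Lemma~\ref{Lem:estimate-h}(1) gives $h(p_i)=0$ for every $p_i\in\Gamma_T$, so $S_0(p_i)=T(p_i)+h(p_i)=T(p_i)$; hence $\Gamma_T$ is still invariant (with the same combinatorial period) under $S_0$, and $S_0|_{\Gamma_T}=T|_{\Gamma_T}$. For the second bullet, differentiating $S_0=T+h$ in the local chart at $p_i$ and invoking Lemma~\ref{Lem:estimate-h}(2) yields $DS_0(p_i)=DT(p_i)+Dh(p_i)=DT(p_i)-\gamma_i\varepsilon\rho_\varepsilon G^*/(2K)$, as claimed. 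Observe also that $h$ is supported in the union of the intervals $(p_i-\rho_\varepsilon G^*,p_i+\rho_\varepsilon G^*)$, which are disjoint once $\rho_\varepsilon G^*<G^*/2$, so gluing the local expressions to a well-defined map on $\TT$ poses no issue.

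For the first bullet, by definition $d_{C^{1,1}}(S_0,T)=\|h\|_{C^0}+\|Dh\|_{C^0}+{\rm Lip}(Dh)$. Lemma~\ref{Lem:estimate-h}(3) controls each summand by $\varepsilon/2$, but to land below $\varepsilon/2$ in total (rather than $\varepsilon$) one has to revisit the cubic expression for $h$: direct inspection shows that $\|h\|_{C^0}$ and $\|Dh\|_{C^0}$ are of order $\varepsilon\rho_\varepsilon G^*/K$, while ${\rm Lip}(Dh)$ is at most a constant times $\varepsilon/K<3\varepsilon/10$ since $K>10$. Combined with the fact that $\rho_\varepsilon G^*$ is arbitrarily small by \eqref{f.choose-rho}, the three contributions sum to well below $\varepsilon/2$. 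The lemma presents no genuine obstacle; the only mildly delicate point is this sharpening of the individual estimates, which is why the proof of Lemma~\ref{Lem:estimate-h} deliberately leaves slack in each piece.
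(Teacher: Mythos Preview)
Your proof is correct and follows exactly the paper's approach; the paper's own argument is the single sentence ``These properties follow from the properties of $h$ directly.'' You have simply unpacked that sentence, and in doing so you correctly flagged that Lemma~\ref{Lem:estimate-h}(3) as stated only yields $d_{C^{1,1}}(S_0,T)<\varepsilon$, so that the sharper bound $<\varepsilon/2$ requires going back to the explicit cubic and using $\rho_\varepsilon G^*\ll 1$ together with $K>10$ --- an observation the paper leaves implicit. One small wording correction: Lemma~\ref{Lem:estimate-h}(3) bounds $\|h\|_{C^1}=\|h\|_{C^0}+\|Dh\|_{C^0}$ (the first two summands together) by $\varepsilon/2$, not ``each summand'' individually, which is why the naive total is $\varepsilon$ rather than $3\varepsilon/2$.
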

\begin{proof}
These properties follows from the properties of $h$ directly.
\end{proof}

For $S_0$, one has that for any $p\in\Gamma_T$, which is also a periodic point of $S_0$,
\begin{equation}\label{f.first-perturbation-periodic}\begin{split}
 F_{S_0}(p)&=f(T(p))-f(p) +\log\|DS_0(p)\|\\
&=f(T(p))-f(p) +\log\|DT(p)\|-\int^{\|DT(p)\|}_{\|DS_0(p)\|}\frac{1}{z}{d}z\\
&\overset{\eqref{f.integral}}=f(T(p))-f(p) +\log\|DT(p)\|-\int^{\|DT(p)\|}_{\|DT(p)\|-\gamma_i\cdot\varepsilon\cdot\rho_\varepsilon\cdot G^*/(2K)}\frac{1}{z}{d}z\\
&=F_T(x)-\varepsilon\cdot\rho_\varepsilon\cdot G^*/K^4.
\end{split}	
\end{equation}
Thus, for any $p,q\in\Gamma_{S_0}$, one has that
\begin{align}\label{f.first-perturb-periodic-minus}
\begin{split}
|F_{S_0}(p)-F_{S_0}(q)|\overset{\eqref{f.first-perturbation-periodic}}=&|F_{T}(p)-F_{T}(q)|\\
\overset{\eqref{f.estimate-after-cohomology}}\le& K(d(p,{\rm supp}(\mu_T))+d(q,{\rm supp}(\mu_T))).
\end{split}
\end{align}

\paragraph{Choose the constants $\widetilde\varepsilon_0>\widetilde\varepsilon>0$ and find the neighborhood $\mathcal U$.}
Take $\widetilde\varepsilon_0\in(0,\varepsilon/2)$ such that
\begin{align}\label{f.choice-epsilon-0}
(\rho_\varepsilon\cdot G^*+\widetilde\varepsilon_0)\cdot K^{L_\varepsilon}<\frac{G^*-2\widetilde\varepsilon_0}{2K}.
\end{align}
and
\begin{align}\label{f.second-choice-epsilon-0}
\begin{split}
L_\varepsilon\left(\varepsilon\cdot \rho_\varepsilon\cdot G^*/K^4-K\cdot d_*-2\widetilde\varepsilon_0\cdot K\right)&>\big(\rho_\varepsilon G^*+ \widetilde \varepsilon_0\big)K^2\\
& +\tau(\Gamma_S)\left(4\widetilde\varepsilon_0\cdot K+2Kd_*/\tau(\Gamma_S)\right).
\end{split}
\end{align}
\begin{align}\label{f.third-condition-epsilon-0}
\varepsilon\cdot \rho_\varepsilon\cdot G^*/K^4-K\cdot d_*-2\widetilde\varepsilon_0\cdot K>0.
\end{align}

\begin{Claim}
One can choose $\widetilde\varepsilon_0$ such that Inequalities \eqref{f.choice-epsilon-0}, \eqref{f.second-choice-epsilon-0} and \eqref{f.third-condition-epsilon-0} hold.
\end{Claim}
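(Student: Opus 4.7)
The plan is to verify that all three inequalities hold strictly in the limit $\widetilde\varepsilon_0 \to 0^+$, and then invoke continuity of both sides in $\widetilde\varepsilon_0$ to conclude that they continue to hold for every sufficiently small positive $\widetilde\varepsilon_0$, in particular for some $\widetilde\varepsilon_0 \in (0,\varepsilon/2)$. Before doing so, I would note that the symbol $\tau(\Gamma_S)$ in \eqref{f.second-choice-epsilon-0} is a fixed integer independent of $\widetilde\varepsilon_0$: once $\mathcal{U}_{\varepsilon,T}$ is chosen inside the structural-stability neighborhood of $S_0$ furnished by Theorem~\ref{Thm:conjugacy}, each $S \in \mathcal{U}_{\varepsilon,T}$ is topologically conjugate to $S_0$ via some $\pi_S$, so $\Gamma_S = \pi_S(\Gamma_T)$ has the same period $\tau(\Gamma_T)$. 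Thus every expression on either side of the three inequalities is a continuous (indeed affine) function of $\widetilde\varepsilon_0$.

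Next I would check each inequality at $\widetilde\varepsilon_0=0$. Inequality \eqref{f.choice-epsilon-0} collapses to $\rho_\varepsilon K^{L_\varepsilon}<1/(2K)$ after dividing by $G^*$, which is exactly \eqref{f.choose-rho}. Inequality \eqref{f.third-condition-epsilon-0} becomes $\varepsilon\rho_\varepsilon G^*/K^4>K d_*$; combining $G^*>C_\varepsilon d_*$ from \eqref{f.important-estimate} with $\varepsilon\rho_\varepsilon C_\varepsilon>6K^5$ from \eqref{f.choose-C-second} yields $\varepsilon\rho_\varepsilon G^*/K^4>6Kd_*$, which gives a comfortable margin. (The edge case $d_*=0$, occurring exactly when $\mu_T$ is already supported on $\Gamma_T$, is trivial.)

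The only substantive case is \eqref{f.second-choice-epsilon-0} at $\widetilde\varepsilon_0=0$, which reads
\[
L_\varepsilon\bigl(\varepsilon\rho_\varepsilon G^*/K^4 - K d_*\bigr) > \rho_\varepsilon G^* K^2 + 2K d_*.
\]
Using the estimate $K d_* \le \varepsilon\rho_\varepsilon G^*/(6K^4)$ just obtained, the left-hand side is at least $(5/6)L_\varepsilon\varepsilon\rho_\varepsilon G^*/K^4$ while the right-hand side is at most $\rho_\varepsilon G^* K^2 + \varepsilon\rho_\varepsilon G^*/(3K^4)$. Dividing through by $\rho_\varepsilon G^*/K^4$ reduces the task to $(5/6)L_\varepsilon\varepsilon > K^6 + \varepsilon/3$, which is a consequence of $L_\varepsilon\varepsilon>4K^6$ in \eqref{f.choose-L} (recalling that $K\ge 10$ and $\varepsilon$ is small).

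Since all six expressions appearing in the three inequalities are continuous in $\widetilde\varepsilon_0$ (with $\tau(\Gamma_S)$ fixed), the strict inequalities established at $\widetilde\varepsilon_0=0$ persist on some open interval $(0,\widetilde\varepsilon_0^{\max})$. Any $\widetilde\varepsilon_0$ in this interval intersected with $(0,\varepsilon/2)$ proves the claim. The only nontrivial bookkeeping is in verifying \eqref{f.second-choice-epsilon-0}, where the interplay of the four previously fixed constants $K$, $L_\varepsilon$, $\rho_\varepsilon$, $C_\varepsilon$ needs the full strength of \eqref{f.choose-L}; the other two inequalities drop out immediately from the single prior inequalities \eqref{f.choose-rho} and \eqref{f.choose-C-second}.
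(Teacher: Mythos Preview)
Your proof is correct and follows essentially the same approach as the paper: check that the three inequalities hold strictly at $\widetilde\varepsilon_0=0$ using the previously fixed relations among $K,L_\varepsilon,\rho_\varepsilon,C_\varepsilon$ together with $G^*>C_\varepsilon d_*$, and then invoke continuity. The paper's one-line proof cites \eqref{f.choose-rho}, \eqref{f.relationship-constants}, \eqref{f.choose-C-second}; you bypass \eqref{f.relationship-constants} and work directly from \eqref{f.choose-L} and \eqref{f.choose-C-second}, but since \eqref{f.relationship-constants} is itself derived from those two this is only a cosmetic difference. One small remark: your justification that $\tau(\Gamma_S)$ is fixed should simply read $\tau(\Gamma_S)=\#\Gamma_T$, which is determined before $\widetilde\varepsilon_0$ is chosen (the neighborhood $\mathcal{U}_{\varepsilon,T}$ comes afterwards).
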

\begin{proof}[Proof of the Claim]
These come from \eqref{f.choose-rho}, \eqref{f.relationship-constants} and \eqref{f.choose-C-second} by noticing $G^*>C_\varepsilon d_*$.

\end{proof}

By Theorem~\ref{Thm:conjugacy}, there is $\widetilde\varepsilon>0$ such that for any $S$, if $d_{C^1}(S,S_0)<\widetilde\varepsilon$, then there is a homeomorphism $\pi_S:\TT\to\TT$ such that
\begin{align}\label{property-homeomorphism}
 d_{C^0}(\pi_S,Id)<\widetilde\varepsilon_0~ \textrm{~and~} ~\pi_S\circ S_0=S\circ \pi_S.
\end{align}
 Consequently, $\Gamma_S=\pi_S(\Gamma_{S_0})$ is a periodic orbit of $S$.

 \medskip

 Without loss of generality, one can assume that $\widetilde\varepsilon<\widetilde\varepsilon_0$.

 \medskip

One has the following estimate on $\Gamma_S$:
\begin{lem}\label{Lem:inner-distance-S}
For any two distinct $x,y\in \Gamma_S$, one has that $d(x,y)>G^*-2\widetilde\varepsilon_0$.

\end{lem}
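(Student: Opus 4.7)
The plan is a direct triangle inequality using the conjugacy $\pi_S$ together with the estimates already established. First I would unpack what $\Gamma_S$ is: by the preceding construction, $\Gamma_S = \pi_S(\Gamma_{S_0})$, and from the lemma asserting that $\Gamma_T$ is still a periodic orbit of $S_0$ we have $\Gamma_{S_0} = \Gamma_T$. Therefore every pair of distinct points $x, y \in \Gamma_S$ can be written as $x = \pi_S(p)$ and $y = \pi_S(q)$ with $p \neq q$ in $\Gamma_T$.

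Next I would invoke the definition of the gap $G^* = G(\Gamma_T)$: since $\Gamma_T$ has more than one point in the generic situation, distinct $p, q \in \Gamma_T$ satisfy $d(p,q) \ge G^*$ (and the one-point case can be treated separately via the convention $G(\Gamma) = 1/(20\max\|DT\|)$ and the same triangle estimate, noting $\widetilde\varepsilon_0 < \varepsilon/2$ has been chosen small). Combining this with $d_{C^0}(\pi_S, Id) < \widetilde\varepsilon_0$ from \eqref{property-homeomorphism}, the triangle inequality gives
\begin{equation*}
d(x,y) \;\ge\; d(p,q) - d(x,p) - d(y,q) \;>\; G^* - 2\widetilde\varepsilon_0,
\end{equation*}
which is exactly the claimed bound.

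There is essentially no obstacle here; the only thing to double-check is that the single-point case $\#\Gamma_T = 1$ is vacuous, since then $\Gamma_S$ also has a single point (as $\pi_S$ is a homeomorphism conjugating $S_0$ and $S$) and there are no two distinct $x, y$ to compare. Thus the statement follows immediately from the conjugacy estimate and the definition of $G^*$.
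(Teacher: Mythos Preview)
Your proof is correct and follows essentially the same approach as the paper: both use the triangle inequality together with $d(\pi_S^{-1}(x),\pi_S^{-1}(y))\ge G^*$ and the bound $d_{C^0}(\pi_S,Id)<\widetilde\varepsilon_0$. Your handling of the single-point case (noting it is vacuous since $\pi_S$ is a bijection) is a bit more careful than the paper's version, but otherwise the arguments coincide.
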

\begin{proof}
By the definition of $G^*$, one has that $d(\pi_S^{-1}(x),\pi_S^{-1}(y))>G^*$. One can conclude by noticing that $d(x,\pi_S^{-1}(x))<\widetilde\varepsilon_0$ and $d(y,\pi_S^{-1}(y))<\widetilde\varepsilon_0$.
\end{proof}

%
%
%
We take $\mathcal U$ to be the $\widetilde\varepsilon$-neighborhood of $S_0$ in the $C^{1,1}$-topology.

\begin{Claim}

$\mathcal U$ is contained in the $\varepsilon$-neighborhood of $T$ in the $C^{1,1}$-topology.
\end{Claim}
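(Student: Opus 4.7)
The plan is to observe that this claim is essentially a triangle inequality in the $C^{1,1}$-metric, assembling two facts established earlier in the construction. First, by the lemma immediately preceding the definition of $\mathcal U$, the perturbation $S_0=T+h$ satisfies $d_{C^{1,1}}(S_0,T)<\varepsilon/2$; this is where the estimates on $\|h\|_{C^1}$ and ${\rm Lip}(Dh)$ from Lemma~\ref{Lem:estimate-h} are used. Second, by the choice of $\widetilde\varepsilon_0\in(0,\varepsilon/2)$ and the stipulation $\widetilde\varepsilon<\widetilde\varepsilon_0$ made just before the claim, any $S\in\mathcal U$ satisfies $d_{C^{1,1}}(S,S_0)<\widetilde\varepsilon<\varepsilon/2$.

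Given these two inputs, the proof reduces to writing
\[
d_{C^{1,1}}(S,T)\le d_{C^{1,1}}(S,S_0)+d_{C^{1,1}}(S_0,T)<\widetilde\varepsilon+\varepsilon/2<\varepsilon/2+\varepsilon/2=\varepsilon,
\]
valid for every $S\in\mathcal U$. There is no real obstacle here: the claim is a bookkeeping statement designed to justify that the neighborhood $\mathcal U$ eventually produced in Theorem~\ref{Thm:perturbative-extended} sits inside the $\varepsilon$-neighborhood of the original map $T$ promised in the statement of that theorem. The only thing worth double-checking is that the $C^{1,1}$-distance $d_{C^{1,1}}$, as defined at the beginning of the paper via $d_{C^{1,1}}(S,T)=d_{C^1}(S,T)+\sup_{x\ne y}|DS(x)-DT(x)-DS(y)+DT(y)|/|x-y|$ (applied with $\chi=1$ and $k=1$), genuinely satisfies the triangle inequality; this is immediate since each of the summands does.
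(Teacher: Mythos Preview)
Your proof is correct and essentially the same as the paper's: the paper's one-line proof simply states that the claim follows from $0<\widetilde\varepsilon<\widetilde\varepsilon_0<\varepsilon/2$, which is exactly the triangle-inequality argument you have written out in full.
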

\begin{proof}[Proof of the Claim]
This follows from the fact that $0<\widetilde\varepsilon<\widetilde\varepsilon_0<\varepsilon/2$.
\end{proof}

For any $S,R\in\mathcal U$, for any $x\in \TT$, by \eqref{f.achoiceof-K}, one has that
\begin{align*}&
|\log\|DS(x)\|-\log\|DR(x)\||\\
\le& \max\{\frac{1}{\inf_{w\in\TT}\|DS(w)\|},\frac{1}{\inf_{w\in\TT}\|DR(w)\|}\}\cdot d_{C^0}(DS(x),DR(x))\\
\le&K/2\cdot d_{C^0}(DS(x),DR(x)).
\end{align*}
Hence, together with  the fact that $K>2{\rm Lip}(f)$,
\begin{align}\label{f.two-minus}
\begin{split}
&~~~~|F_S(x)-F_R(x)|\\
&\le |f(S(x))-f(R(x))|+|\log\|DS(x)\|-\log\|DR(x)\||\\
&\le {\rm Lip}(f)\cdot d_{C^0}(S,R)+K/2\cdot d_{C^0}(DS(x),DR(x))\\
&\le \widetilde\varepsilon_0\cdot K.
\end{split}
\end{align}


\paragraph{The average on $\Gamma_S$.}For any $S\in{\mathcal U}$, denote by

$$A_{\Gamma_{S}}=\int F_S{d}\delta_{\Gamma_S}=\frac{\sum_{z\in\Gamma_{S}}F_S(z)}{\#\Gamma_{S}}.$$

Clearly, for $S_0$, by \eqref{f.first-perturbation-periodic}, one has that
\begin{align}\label{f.difference-periodic}
\begin{split}
A_{\Gamma_{S_0}}&=\frac{\sum_{z\in\Gamma_{T}}F_T(z)}{\#\Gamma_{S}}-\varepsilon\cdot\rho_\varepsilon\cdot G^*/K^4\\
&\overset{\eqref{f.estimate-after-cohomology}}\le \alpha(T)+K\cdot \frac{d_*}{\#\Gamma_T}-\varepsilon\cdot\rho_\varepsilon\cdot G^*/K^4.
\end{split}
\end{align}
Thus, for any $S\in \mathcal U$, one has that
\begin{align}\label{f.estimate-on-A-Gamma_S}
A_{\Gamma_S}\overset{\eqref{f.two-minus}}\le A_{\Gamma_{S_0}}+K\cdot\widetilde\varepsilon_0\overset{\eqref{f.difference-periodic}}\le \alpha(T)+K\cdot\widetilde\varepsilon_0+K\cdot \frac{d_*}{\#\Gamma_T}-\varepsilon\cdot\rho_\varepsilon\cdot G^*/K^4.
\end{align}

\paragraph{Estimates for $\Gamma_S$.}
For any $x,y\in \Gamma_S$, one has that
{\small\begin{align}\label{f.newsystem-difference-periodic}
\begin{split}
\left|F_S(x)-F_S(y)\right|
\le&\left|F_S(x)-F_{S_0}(x)\right|+\left|F_S(y)-F_{S_0}(y)\right|+\left|F_{S_0}(x)-F_{S_0}(y)\right|\\
\overset{\eqref{f.two-minus}}\le& 2\widetilde\varepsilon_0\cdot K+\left|F_{S_0}(x)-F_{S_0}(\pi_S^{-1}(x))\right|+\left|F_{S_0}(y)-F_{S_0}(\pi_S^{-1}(y))\right|\\
&~~~~~~~+\left|F_{S_0}(\pi_S^{-1}(x))-F_{S_0}(\pi_S^{-1}(y))\right|\\
\overset{\eqref{f.first-perturb-periodic-minus}}\le& 2\widetilde\varepsilon_0\cdot K+2\widetilde\varepsilon_0\cdot K+K(d(\pi_S^{-1}(x),{\rm supp}(\mu_T))+d(\pi_S^{-1}(y),{\rm supp}(\mu_T)))\\
=&4\widetilde\varepsilon_0\cdot K+K(d(\pi_S^{-1}(x),{\rm supp}(\mu_T))+d(\pi_S^{-1}(y),{\rm supp}(\mu_T))).
\end{split}
\end{align}}

%

Thus, for each $x\in\Gamma_S$, one has that
{\small\begin{align}\label{f.periodic-sum-average}
\begin{split}
\left|F_S(x)- A_{\Gamma_S}\right|
\le&\frac{1}{\tau(\Gamma_S)}\sum_{y\in\Gamma_S}\left|F_S(x)-F_S(y)\right|\\
\overset{\eqref{f.newsystem-difference-periodic}}\le& \frac{1}{\tau(\Gamma_S)}\sum_{y\in\Gamma_S}\big(4\widetilde\varepsilon_0\cdot K+K(d(\pi_S^{-1}(x),{\rm supp}(\mu_T))+d(\pi_S^{-1}(y),{\rm supp}(\mu_T)))\big)\\
=&4\widetilde\varepsilon_0\cdot K+K(d(\pi_S^{-1}(x),{\rm supp}(\mu_T))+\frac{1}{\tau(\Gamma_S)}\sum_{y\in\Gamma_S}d(\pi_S^{-1}(y),{\rm supp}(\mu_T)))\\
=&4\widetilde\varepsilon_0\cdot K+K(d(\pi_S^{-1}(x),{\rm supp}(\mu_T))+d_*/\tau(\Gamma_S).
\end{split}
\end{align}}
Define  ${\widetilde F}_S(x)=F_S(x)-A_{\Gamma_{S}}$ for all $x\in \mathbb{T}$.
Thus,
\begin{align}\label{f.integral-G}
\frac{\sum_{z\in\Gamma_{S}}{\widetilde F}_S(z)}{\#\Gamma_{S}}=\int {\widetilde F}_Sd\delta_{\Gamma_{S}}{=}0.
\end{align}
Moreover, recall that $F_S=f\circ S-f+\log\|DS\|$, one has
\begin{align}\label{f.cohomology-bounded-by-K}
\begin{split}
{\rm Lip}({\widetilde F}_S)&={\rm Lip}(F_S)\le {\rm Lip}(f)\max_{x\in\TT}\|DS(x)\|+{\rm Lip}(f)+\frac{1}{\min_{x\in\TT}\|DS(x)\|}{\rm Lip}(DS)\\
&\le {\rm Lip}(f)(\max_{x\in\TT}\|DS(x)\|+1)+{\rm Lip}(DS)\overset{\eqref{f.K-control-Lipschitz}}<K
\end{split}
\end{align}

\paragraph{Domains away from the periodic orbit.}Put
\begin{align*}\mathcal{F}_{T}=\{x\in\mathbb{T}:d(x,\Gamma_T)> \rho_\varepsilon\cdot G^*\}.\end{align*}
Then $\mathcal{F}_{T}$ is an open subset of $\mathbb{T}$. By the definition of $h(x)$, one can see that $h(x)=0$ for any $x\in \mathcal{F}_{T}$ and hence
\begin{align}\label{45}F_{S_0}|_{\mathcal{F}_{T}}=F_T|_{\mathcal{F}_{T}}.\end{align}

\paragraph{Estimates in ${\mathcal F}_T$.}We give a lower bound of ${\widetilde F}_S$ in ${\mathcal F}_T$.
\begin{Claim}
\begin{align}\label{f.first-estimite-G}
{\widetilde F}_S(x)\ge \varepsilon\cdot \rho_\varepsilon\cdot G^*/K^4-K\cdot d_*-2\widetilde\varepsilon_0\cdot K>0, ~\forall x\in \mathcal{F}_T.
\end{align}
\end{Claim}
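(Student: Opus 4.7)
The plan is to combine three ingredients already established: (i) the fact that $F_T \geq \alpha(T)$ everywhere (this comes from (\ref{P-getmeasure})); (ii) the identification $F_{S_0} = F_T$ off the support of the perturbation $h$ (this is (\ref{45})); and (iii) the bound (\ref{f.two-minus}) controlling the difference $|F_S - F_{S_0}| \le \widetilde\varepsilon_0 \cdot K$. The strategy is to get a lower bound for $F_S(x)$ on $\mathcal{F}_T$ and then compare it with the upper bound (\ref{f.estimate-on-A-Gamma_S}) for $A_{\Gamma_S}$.

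First, I would fix $x \in \mathcal{F}_T$. Since $d(x,\Gamma_T) > \rho_\varepsilon \cdot G^*$, the perturbation $h$ vanishes identically in a neighborhood of $x$, so $S_0(x) = T(x)$ and $DS_0(x) = DT(x)$; hence $F_{S_0}(x) = F_T(x)$, and by (\ref{P-getmeasure}) this is at least $\alpha(T)$. Applying (\ref{f.two-minus}) to the pair $S, S_0$ gives
\[
F_S(x) \ge F_{S_0}(x) - \widetilde\varepsilon_0 \cdot K \ge \alpha(T) - \widetilde\varepsilon_0 \cdot K.
\]

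Next I would invoke (\ref{f.estimate-on-A-Gamma_S}), which states
\[
A_{\Gamma_S} \le \alpha(T) + K \cdot \widetilde\varepsilon_0 + K\cdot \frac{d_*}{\#\Gamma_T} - \varepsilon \cdot \rho_\varepsilon \cdot G^*/K^4,
\]
and subtract it from the lower bound on $F_S(x)$. Using $d_*/\#\Gamma_T \le d_*$ (and in fact the same estimate works using $d_*/\#\Gamma_T$ directly, but the statement uses $d_*$), this yields
\[
\widetilde F_S(x) = F_S(x) - A_{\Gamma_S} \ge \varepsilon \cdot \rho_\varepsilon \cdot G^*/K^4 - K \cdot d_* - 2\widetilde\varepsilon_0 \cdot K,
\]
which is exactly the claimed lower bound. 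The positivity of this quantity is then precisely the third condition (\ref{f.third-condition-epsilon-0}) imposed in the choice of $\widetilde\varepsilon_0$.

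I do not anticipate a genuine obstacle here: the heart of the matter was done when the constants $\rho_\varepsilon, C_\varepsilon, \widetilde\varepsilon_0$ were chosen so that the gain $\varepsilon \cdot \rho_\varepsilon \cdot G^*/K^4$ coming from the perturbation on $\Gamma_T$ dominates both the error $K d_*$ (distance of $\Gamma_T$ from $\mathrm{supp}(\mu_T)$) and the error $2 \widetilde\varepsilon_0 K$ (size of the conjugacy displacement and of the $S$-vs-$S_0$ difference). The only thing one must be careful about is to apply (\ref{45}) rather than attempting a direct bound on $F_S(x)$ from $F_T(x)$ through (\ref{f.two-minus}) — the latter gives only a $\widetilde\varepsilon_0 K$ error but \emph{not} $\varepsilon K$, because on $\mathcal F_T$ the perturbation $h$ genuinely vanishes and $S_0$ (not $T$) is the reference map to use.
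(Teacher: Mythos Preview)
Your proof is correct and follows essentially the same route as the paper: use \eqref{45} together with \eqref{P-getmeasure} to get $F_{S_0}(x)\ge\alpha(T)$ on $\mathcal{F}_T$, apply \eqref{f.two-minus} with the pair $(S,S_0)$ to pass to $F_S$, subtract the upper bound \eqref{f.estimate-on-A-Gamma_S} for $A_{\Gamma_S}$, and conclude positivity via \eqref{f.third-condition-epsilon-0}. The only cosmetic difference is that the paper drops $d_*/\#\Gamma_T$ to $d_*$ implicitly in the final line, exactly as you note.
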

\begin{proof}[Proof of the Claim]
Since $F_{S_0}(x)\overset{\eqref{45}}=F_T(x)\ge \alpha(T)$ for any $x\in {\mathcal F}_T$, one has that
\begin{align}\label{f.estimate-FS-FT}
F_S(x)\overset{\eqref{f.two-minus}}\ge F_{S_0}(x)-\widetilde\varepsilon_0\cdot K\ge \alpha(T)-\widetilde\varepsilon_0\cdot K,~~~\forall x\in{\mathcal F}_T.
\end{align}
This implies that
\begin{align*}
&~~~~~~~~{\widetilde F}_S(x)=F_S(x)-A_{\Gamma_S}\\
&\overset{\eqref{f.estimate-FS-FT},\eqref{f.estimate-on-A-Gamma_S}}\ge (\alpha(T)-\widetilde\varepsilon_0\cdot K)-(\alpha(T)+K\cdot\widetilde\varepsilon_0+K\cdot \frac{d_*}{\#\Gamma_T}-\varepsilon\cdot\rho_\varepsilon\cdot G^*/K^4)\\
&~~~\ge \varepsilon\cdot\rho_\varepsilon\cdot G^*/K^4-K\cdot d_*-2\widetilde\varepsilon_0\cdot K\overset{\eqref{f.third-condition-epsilon-0}}>0.
\end{align*}
\end{proof}

\paragraph{Estimates outside of ${\mathcal F}_T$}
\begin{lem}\label{Lem:sum-see-positivity}
If $x\notin\mathcal{F}_T\cup\Gamma_S$, then there exists $N(x)\in\mathbb{N}$ such that
$$x,S(x),\cdots,S^{N(x)-1}x\in\{z\in\mathbb{T}:d(z,\Gamma_{S})\le (\rho_\varepsilon\cdot G^*+\widetilde \varepsilon_0)K^{L_\varepsilon}\}$$ and $\sum_{i=0}^{N(x)-1}{\widetilde F}_S(S^i(x))>0$ .

\end{lem}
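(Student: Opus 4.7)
The plan is to take $N(x)$ to be the first exit time from the $(\rho_\varepsilon G^*+\widetilde\varepsilon_0)K^{L_\varepsilon}$-neighborhood of $\Gamma_S$, then decompose the orbit segment $x,\ldots,S^{N(x)-1}(x)$ into a ``phase 1'' (points close to $\Gamma_T$, i.e.\ outside $\mathcal{F}_T$) and a ``phase 2'' (points in $\mathcal{F}_T$), estimate each part separately, and combine with \eqref{f.second-choice-epsilon-0}. First, I would let $p\in\Gamma_S$ be the point closest to $x$, and set $q=\pi_S^{-1}(p)\in\Gamma_T$. Since $x\notin\mathcal{F}_T$ and $d(p,q)<\widetilde\varepsilon_0$, $a_0:=d(x,p)\le\rho_\varepsilon G^*+\widetilde\varepsilon_0$. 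To show $N(x)<\infty$, I would argue by contradiction: \eqref{f.choice-epsilon-0} combined with Lemma~\ref{Lem:inner-distance-S} and \eqref{f.uniform-neighborhood} forces $(\rho_\varepsilon G^*+\widetilde\varepsilon_0)K^{L_\varepsilon}<G(\Gamma_S)/(2\max\|DS\|)$, and Lemma~\ref{shadow} applied to $S$ would then give $x\in\Gamma_S$, violating the hypothesis. An inductive check, using that $(\rho_\varepsilon G^*+\widetilde\varepsilon_0)K^{L_\varepsilon}<G(\Gamma_S)/2$, shows $S^i(p)$ is the unique closest point of $\Gamma_S$ to $S^i(x)$ for all $i<N(x)$, so $a_i:=d(S^i(x),S^i(p))=d(S^i(x),\Gamma_S)$.

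Next I would carry out the phase split. Since $d(\Gamma_S,\Gamma_T)<\widetilde\varepsilon_0$, one has $S^i(x)\notin\mathcal{F}_T$ exactly when $a_i\le\rho_\varepsilon G^*+\widetilde\varepsilon_0$ (up to the $\widetilde\varepsilon_0$-error), and the mean value theorem together with expansion yields $\lambda_S a_i\le a_{i+1}<(K/2)a_i$, so $(a_i)$ is strictly increasing. Consequently there is a threshold $i_0\ge 1$ so that phase 1 is $\{0,\ldots,i_0-1\}$ and phase 2 is $\{i_0,\ldots,N(x)-1\}$. To estimate the cardinality of phase 2, I would combine $a_{i_0-1}\le\rho_\varepsilon G^*+\widetilde\varepsilon_0$, $a_{N(x)-1}>(\rho_\varepsilon G^*+\widetilde\varepsilon_0)K^{L_\varepsilon-1}$ (coming from the definition of $N(x)$ and $a_{N(x)}<Ka_{N(x)-1}$), and the upper expansion $a_{i+1}<Ka_i$, to derive $N(x)-i_0\ge L_\varepsilon$.

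For the sum, phase 2 is the positive contribution: by \eqref{f.first-estimite-G}, each phase 2 term is at least $\varepsilon\rho_\varepsilon G^*/K^4-Kd_*-2\widetilde\varepsilon_0 K>0$, so that subsum is bounded below by $L_\varepsilon(\varepsilon\rho_\varepsilon G^*/K^4-Kd_*-2\widetilde\varepsilon_0 K)$, i.e.\ the left-hand side of \eqref{f.second-choice-epsilon-0}. For phase 1, I would write
$$\widetilde F_S(S^i(x))=\bigl[\widetilde F_S(S^i(x))-\widetilde F_S(S^i(p))\bigr]+\widetilde F_S(S^i(p)),$$
and bound the two parts separately. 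The first part is controlled by the Lipschitz estimate \eqref{f.cohomology-bounded-by-K}: $|\widetilde F_S(S^i(x))-\widetilde F_S(S^i(p))|\le Ka_i$, and using the backward geometric estimate $a_i\le\lambda_S^{-(i_0-1-i)}a_{i_0-1}$ together with $\lambda_S/(\lambda_S-1)<K$, the total first contribution has absolute value at most $K^2(\rho_\varepsilon G^*+\widetilde\varepsilon_0)$. The second part is a partial sum along the periodic orbit $\Gamma_S$: decomposing $\{0,\ldots,i_0-1\}$ into complete $\tau(\Gamma_S)$-blocks plus a remainder of length $<\tau(\Gamma_S)$, each complete block contributes zero by \eqref{f.integral-G}, while the remainder is bounded termwise via \eqref{f.periodic-sum-average}, using that the sum of $d(\pi_S^{-1}(S^i(p)),\mathrm{supp}(\mu_T))$ over any consecutive stretch of length $\le\tau(\Gamma_S)$ is at most $d_*$. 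This yields a remainder bound of $\tau(\Gamma_S)\bigl(4\widetilde\varepsilon_0 K+2Kd_*/\tau(\Gamma_S)\bigr)$, matching the second term on the right-hand side of \eqref{f.second-choice-epsilon-0}. Adding phase 2 and phase 1, the total sum is at least
$$L_\varepsilon\bigl(\varepsilon\rho_\varepsilon G^*/K^4-Kd_*-2\widetilde\varepsilon_0 K\bigr)-(\rho_\varepsilon G^*+\widetilde\varepsilon_0)K^2-\tau(\Gamma_S)\bigl(4\widetilde\varepsilon_0 K+2Kd_*/\tau(\Gamma_S)\bigr),$$
which is strictly positive by \eqref{f.second-choice-epsilon-0}. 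The main obstacle will be the careful bookkeeping needed to argue that the remainder on $\Gamma_S$ really has length $<\tau(\Gamma_S)$ and that the $d_*$-bound on the remainder accumulates only once: this is precisely why the constants $L_\varepsilon,\rho_\varepsilon,C_\varepsilon$ were chosen in the order they were, so that the $L_\varepsilon$-many guaranteed phase 2 iterates dominate a phase 1 contribution that is \emph{bounded independently of $i_0$}.
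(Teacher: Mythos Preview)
Your proposal is correct and follows essentially the same route as the paper: define $N(x)$ as the first exit time from the $(\rho_\varepsilon G^*+\widetilde\varepsilon_0)K^{L_\varepsilon}$-neighborhood of $\Gamma_S$, split the orbit at the threshold where $a_i$ first exceeds $\rho_\varepsilon G^*+\widetilde\varepsilon_0$, bound the tail of length $\ge L_\varepsilon$ from below via \eqref{f.first-estimite-G}, and control the initial block by the Lipschitz comparison to the shadowing periodic point plus the partial-sum estimate \eqref{f.periodic-sum-average}, concluding with \eqref{f.second-choice-epsilon-0}. The one point to tighten is that membership in $\mathcal{F}_T$ is governed by $d(\cdot,\Gamma_T)$, not by $a_i=d(\cdot,\Gamma_S)$, so the phases need not split cleanly along $\mathcal{F}_T$; but your estimates only use that $a_{i_0-1}\le\rho_\varepsilon G^*+\widetilde\varepsilon_0$ and that $S^i(x)\in\mathcal{F}_T$ for $i\ge i_0$, both of which hold if you simply take $i_0$ to be the first index with $a_i>\rho_\varepsilon G^*+\widetilde\varepsilon_0$ (this is the paper's $m(x)$).
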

\begin{proof}
Now we assume that $x\notin \mathcal{F}_T\cup\Gamma_S$. This implies that $d(x,\Gamma_T)\le \rho_\varepsilon\cdot G^*$. By the property of $\pi_S$ \eqref{property-homeomorphism} and the fact $\Gamma_T=\Gamma_{S_0}$, one has that
\begin{align*}
d(x,\Gamma_{S})\le d(x,\Gamma_T)+ \widetilde\varepsilon_0
\le \rho_\varepsilon\cdot G^*+ \widetilde\varepsilon_0.
\end{align*}
Notice that
\begin{align}\label{f.7}\rho_\varepsilon\cdot G^*+ \widetilde \varepsilon_0<(\rho_\varepsilon\cdot G^*+ \widetilde \varepsilon_0)K^{L_\varepsilon}\overset{\eqref{f.choice-epsilon-0}}<\frac{G^*-2\widetilde \varepsilon_0}{2K}.\end{align}
By Lemma~\ref{shadow}, Lemma~\ref{Lem:inner-distance-S} and the assumption $x\notin \Gamma_{S}$, there exists $n\in\mathbb{N}\cup\{0\}$ such that
\begin{align*}d(S^n(x),\Gamma_{S})\ge \frac{G(\Gamma_{S})}{2\max_{x\in\TT}\|DS(x)\|}\overset{\eqref{f.uniform-neighborhood}}\ge \frac{G^*-2\widetilde\varepsilon_0}{2K}\overset{\eqref{f.7}}>\rho_\varepsilon\cdot G^*+ \widetilde \varepsilon_0.\end{align*}

Notice that $S$ is uniformly expanding and the expanding rate of $S$ is not larger than $K$. Thus, there exists $m(x)\in\mathbb{N}$, the minimal natural number such that
$$\rho_\varepsilon\cdot G^*+ \widetilde\varepsilon_0<d(S^{m(x)}x,\Gamma_{S})\le (\rho_\varepsilon\cdot G^*+ \widetilde\varepsilon_0)\cdot K,$$
i.e.,
\begin{align*}
m(x)=\min\{m\in\NN:~d(S^{m}x,\Gamma_{S})> \rho_\varepsilon\cdot G^*+ \widetilde \varepsilon_0\}.
\end{align*}
By the minimality of $m(x)$, one has that
\begin{align}\label{f.first-segment}
d(S^{l}x,\Gamma_{S})\le \rho_\varepsilon\cdot G^*+ \widetilde \varepsilon_0,~~~\forall ~0\le l\le m(x)-1.
\end{align}

Since the expanding rate of $S$ is bounded by $K$, one has
\begin{align*}
 S^{m(x)}(x),S^{m(x)+1}(x),\cdots,&S^{m(x)+L_\varepsilon-1}(x)\\
 &\in \{z\in\mathbb{T}:\rho_\varepsilon\cdot G^*+\widetilde \varepsilon_0<d(z,\Gamma_{S})\le (\rho_\varepsilon\cdot G^*+\widetilde \varepsilon_0)K^{L_\varepsilon}\}.
\end{align*}
Choose $L(x)\in\NN$ to be the minimal integer such that $d(S^{m(x)+L(x)}(x),\Gamma_{S})>(\rho_\varepsilon\cdot G^*+\widetilde \varepsilon_0)K^{L_\varepsilon}$. Clearly by the definition, one has that $L(x)\ge L_\varepsilon$. Now we set $N(x)=m(x)+L(x)$.

\medskip

Now we are going to prove  $\sum_{i=0}^{m(x)+L(x)-1}{\widetilde F}_S(S^i(x))>0$.
Let $m\in\{0,1,\cdots,m(x)-1\}$ be the maximal number such that
$$S^{m}(x)\notin \mathcal{F}_T.$$
Then by \eqref{f.first-estimite-G}, one has that
\begin{align}\label{X-12}
{\widetilde F}_S(S^i(x))>\frac{\varepsilon\cdot \rho_\varepsilon\cdot G^*}{K^4}-2\widetilde\varepsilon_0\cdot K-K\cdot d_*>0,~~~\forall~m\le i\le m(x)+L(x)-1.\end{align}
Choose $p_{S}\in\Gamma_{S}$ such that
$$d(x,\Gamma_{S})=d(x,p_{S}).$$
\begin{Claim}
For $0\le i\le m-1$, one has that $$d(S^i(x),\Gamma_{S})=d(S^i(x),S^i(p_{S})).$$
\end{Claim}
\begin{proof}[Proof of the Claim]
Given $0\le i\le m$, one knows that by Lemma~\ref{Lem:inner-distance-S},
$$d(S^i(p_S),\Gamma_S\setminus\{S^i(p_S)\})>G^*-2\widetilde\varepsilon_0.$$ Thus, it suffices to prove that $d(S^i(x),S^{i}(p_{S}))<G^*/2-\widetilde\varepsilon_0$. By the fact that $i\le m-1< m(x)$, one has that
$$d(S^i(x),S^{i}(p_{S}))\le  \rho_\varepsilon\cdot G^*+\widetilde \varepsilon_0\le(\rho_\varepsilon\cdot G^*+\widetilde \varepsilon_0)K^{L_\varepsilon}\overset{\eqref{f.choice-epsilon-0}}< \frac{G^*-2\widetilde\varepsilon_0}{2K}<G^*/2-\widetilde\varepsilon_0.$$
\end{proof}
%
%

By the above claim, we have
\begin{align*}
\sum_{i=0}^{m-1}d(S^i(x),S^i(p_S))\le& \sum_{i=0}^{m-1} \frac{d(S^{m-1}(x),S^{m-1}(p_S))}{(\min_{w\in\TT}\|DS(w)\|)^i}\\
\le&\frac{(\rho_\varepsilon\cdot G^*+ \widetilde \varepsilon_0)\cdot\min_{w\in\TT}\|DS(w)\|}{\min_{w\in\TT}\|DS(w)\|-1}\overset{\eqref{f.uniform-neighborhood}}<(\rho_\varepsilon\cdot G^*+ \widetilde \varepsilon_0)K.
\end{align*}
Therefore,
\begin{align}\label{f.close-to-periodic}
\begin{split}\sum_{i=0}^{m-1}\big({\widetilde F}_S(S^i(x))-{\widetilde F}_S(S^i(p_S))\big)\overset{\eqref{f.cohomology-bounded-by-K}}\ge -K\sum_{i=0}^{m-1}d(S^i(x),S^i(p_S))
\ge -K^2(\rho_\varepsilon\cdot G^*+\widetilde\varepsilon_0).
\end{split}
\end{align}

\begin{Claim}
One has that
\begin{align}\label{f.periodic-partial-sum}
\sum_{i=0}^{m-1}{\widetilde F}_S(S^i(p_S))\ge -\tau(\Gamma_S)\left(4\widetilde\varepsilon_0\cdot K+2Kd_*/\tau(\Gamma_S)\right).
\end{align}
\end{Claim}
\begin{proof}[Proof of the claim]
Assume that $m=Q\tau(\Gamma_S)+r$ for some nonnegative integer $Q$ and $0\le r\le \tau(\Gamma_S)-1$. When $r=0$, one has that
\begin{align*}
\begin{split}
\sum_{i=0}^{m-1}{\widetilde F}_S(S^i(p_S))&=Q\cdot \sum_{i=0}^{\tau(\Gamma_S)-1}{\widetilde F}_S(S^i(p_S))=0\\
&\ge-\tau(\Gamma_S)\left(4\widetilde\varepsilon_0\cdot K+2Kd_*/\tau(\Gamma_S)\right).
\end{split}
\end{align*}
When $r\ge1$,
\begin{align*}
\begin{split}
\sum_{i=0}^{m-1}{\widetilde F}_S(S^i(p_S))&=Q\cdot \sum_{i=0}^{\tau(\Gamma_S)-1}{\widetilde F}_S(S^i(p_S))+\sum_{i=Q\tau(\Gamma_S)}^{Q\tau(\Gamma_S)+r-1}{\widetilde F}_S(S^i(p_S))\\
&\overset{\eqref{f.integral-G}}=\sum_{i=0}^{r-1}{\widetilde F}_S(S^i(p_S))\\
&=\sum_{i=0}^{r-1}F_S(S^i(p_S))-rA_{\Gamma_{S}}\\
&\ge -\sum_{z\in\Gamma_{S}} \left|F_S(z)-A_{\Gamma_{S}}\right|\\
&\overset{\eqref{f.periodic-sum-average}}\ge-\sum_{z\in\Gamma_{S}}\left(4\widetilde\varepsilon_0\cdot K+K(d(\pi_S^{-1}(z),{\rm supp}(\mu_T))+d_*/\tau(\Gamma_S)\right)\\
&\ge-\tau(\Gamma_S)\left(4\widetilde\varepsilon_0\cdot K+2Kd_*/\tau(\Gamma_S)\right).
\end{split}
\end{align*}
\end{proof}

Therefore, by applying \eqref{X-12}, \eqref{f.close-to-periodic} and \eqref{f.periodic-partial-sum},  we have that
\begin{align*}
&\hskip0.5cm \sum_{i=0}^{N(x)-1}{\widetilde F}_S(S^i(x))\\
&\ge\sum_{i=m}^{m(x)+L(x)-1}{\widetilde F}_S(S^i(x))+\sum_{i=0}^{m-1}\big({\widetilde F}_S(S^i(x))-{\widetilde F}_S(S^i(p_S))\big)+\sum_{i=0}^{m-1}{\widetilde F}_S(S^i(p_S)))\\
&\ge  (L(x)+m(x)-m)\left(\varepsilon\cdot \rho_\varepsilon\cdot G^*/K^4-K\cdot d_*-2\widetilde\varepsilon_0\cdot K\right)\\
&\ \ \ \ \ - \big(\rho_\varepsilon G^*+ \widetilde \varepsilon_0\big)K^2 -\tau(\Gamma_S)\left(4\widetilde\varepsilon_0\cdot K+2Kd_*/\tau(\Gamma_S)\right)\\
&\ge  L_\varepsilon\left(\varepsilon\cdot \rho_\varepsilon\cdot G^*/K^4-K\cdot d_*-2\widetilde\varepsilon_0\cdot K\right)\\
&\ \ \ \ \ - \big(\rho_\varepsilon G^*+ \widetilde \varepsilon_0\big)K^2 -\tau(\Gamma_S)\left(4\widetilde\varepsilon_0\cdot K+2Kd_*/\tau(\Gamma_S)\right)\\
&\overset{\eqref{f.second-choice-epsilon-0}}>0.
\end{align*}
\end{proof}

\paragraph{Ergodic measures.}Now we check for measures. Take an $S$-ergodic probability measure $\mu\neq \delta_{\Gamma_{S}}$. To conclude, it suffices to prove that
$$\int F_S d\mu>\int F_S d\delta_{\Gamma_{S}},$$
which is equivalent to to show that
\begin{align*}
\int {\widetilde F}_Sd\mu>\int {\widetilde F}_Sd\delta_{\Gamma_{S}}\overset{\eqref{f.integral-G}}=0.
\end{align*}

 Let $x$ be a generic point of $\mu$. Then $S^i(x)\not \in \Gamma_{S}$  for all $i\in\mathbb{N}\cup\{0\}$ since $\mu\neq \delta_{\Gamma_{S}}$.
 By Lemma~\ref{Lem:sum-see-positivity}, for $x\notin\cF_T$, one can define $N(x)$. So for any $y\notin\Gamma_S$, define
  \[ I(y) = \left\{
 \begin{array}{rl}1~~~~~~, & \text{if $y\in{\mathcal F}_T$} ,\\
~~~N(y)~~~, & \text{if $y\notin{\mathcal F}_T$}.
 \end{array} \right. \]

 \begin{Claim}\label{cor:index-positivy}
\begin{align}\label{f.index-positivy}
\sum_{i=0}^{I(y)-1}{\widetilde F}_S(S^i(y))>0,~~~\forall y\notin\Gamma_S.
\end{align}
 \end{Claim}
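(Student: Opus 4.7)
The plan is to prove the claim by a direct case analysis based on the definition of $I(y)$, since each of the two branches in that definition has already been engineered to give positivity. The two cases are $y \in \mathcal{F}_T$ and $y \notin \mathcal{F}_T$, and in each case I can point to an inequality established earlier in the proof of Theorem~\ref{Thm:perturbative-extended}.

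First I would handle the easy case $y \in \mathcal{F}_T$. Here $I(y) = 1$ by definition, so the sum reduces to the single term ${\widetilde F}_S(y)$. But \eqref{f.first-estimite-G} shows that on $\mathcal{F}_T$,
\[
{\widetilde F}_S(y) \ge \varepsilon\cdot \rho_\varepsilon\cdot G^*/K^4 - K\cdot d_* - 2\widetilde\varepsilon_0\cdot K,
\]
and the right-hand side is strictly positive by \eqref{f.third-condition-epsilon-0}. So the single term is already positive, and the claim holds in this case.

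Next I would handle the case $y \notin \mathcal{F}_T \cup \Gamma_S$. Since $y \notin \Gamma_S$, the quantity $N(y)$ is defined (Lemma~\ref{Lem:sum-see-positivity} constructs it), and we have $I(y) = N(y)$. The positivity of $\sum_{i=0}^{N(y)-1}{\widetilde F}_S(S^i(y))$ is then exactly the conclusion of Lemma~\ref{Lem:sum-see-positivity}. Putting the two cases together, \eqref{f.index-positivy} holds for every $y \notin \Gamma_S$, proving the claim.

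There is no real obstacle: the work has all been done upstream. The only subtlety to flag is bookkeeping — namely, that the auxiliary quantity $N(y)$ in the second case is only well-defined when $y \notin \Gamma_S$, which is exactly the standing hypothesis of the claim, so the definition of $I(y)$ is consistent on its stated domain.
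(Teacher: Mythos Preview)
Your proposal is correct and matches the paper's own proof essentially verbatim: both handle the case $y\in\mathcal{F}_T$ via $I(y)=1$ and \eqref{f.first-estimite-G}, and the case $y\notin\mathcal{F}_T\cup\Gamma_S$ via Lemma~\ref{Lem:sum-see-positivity}. There is nothing to add.
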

\begin{proof}[Proof of the Claim]
By Lemma~\ref{Lem:sum-see-positivity}, one has that $\sum_{i=0}^{I(y)-1}{\widetilde F}_S(S^i(y))>0$ for any $y\notin{\mathcal F}_T\cup\Gamma_S$. If $y\in{\mathcal F}_T$, $I(y)=1$. One only has to show that ${\widetilde F}_S(y)>0$ for $y\in{\mathcal F}_T$. This is given by Inequality~(\ref{f.first-estimite-G}).
\end{proof}
 Now we define an index sequence $\{j_n\}_{n\in\mathbb{N}}$ by induction on $n$. Put
 $$j_1=0,~\textrm{and}~j_n=j_{n-1}+I(S^{j_{n-1}}(x))~\textrm{for}~n\ge 2.$$
The index sequence $\{j_n\}_{n\in\mathbb{N}}$ is well defined since  $S^i(x)\not \in \Gamma_{S}$  for all $i\in\mathbb{N}\cup\{0\}$.

\medskip


Set ${\mathcal F}_1=\{z\in\mathbb{T}:d(z,\Gamma_{S})>(\rho_\varepsilon\cdot G^*+\widetilde \varepsilon_0)K^{L_\varepsilon}\}$. By the definition, one can check that ${\mathcal F}_1\subset {\mathcal F}_T$.
\begin{Claim}
\begin{align}\label{f.measure-F_1}\mu( \mathcal{F}_1)>0.\end{align}
\end{Claim}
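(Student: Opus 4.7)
The plan is a contradiction argument using the expansive-like Lemma~\ref{shadow} applied to the map $S$ and its periodic orbit $\Gamma_S$. Suppose $\mu(\mathcal{F}_1) = 0$; then $\mu$ gives full mass to the tubular neighborhood $A := \{z \in \TT : d(z, \Gamma_S) \leq (\rho_\varepsilon G^* + \widetilde\varepsilon_0) K^{L_\varepsilon}\}$. By $S$-invariance of $\mu$, each preimage $S^{-n}(A)$ also has full $\mu$-measure, hence so does the countable intersection $\bigcap_{n \geq 0} S^{-n}(A)$. Consequently, $\mu$-almost every $z$ satisfies $d(S^n z, \Gamma_S) \leq (\rho_\varepsilon G^* + \widetilde\varepsilon_0) K^{L_\varepsilon}$ for every $n \geq 0$.

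The second (and decisive) step is to recognize that the constants in the construction were tuned precisely to push this uniform bound below the shadowing threshold for $\Gamma_S$. Indeed, Lemma~\ref{Lem:inner-distance-S} gives $G(\Gamma_S) > G^* - 2\widetilde\varepsilon_0$, inequality \eqref{f.uniform-neighborhood} gives $2\max_{x \in \TT}\|DS(x)\| < K$, and inequality \eqref{f.choice-epsilon-0} was set up to yield $(\rho_\varepsilon G^* + \widetilde\varepsilon_0) K^{L_\varepsilon} < (G^* - 2\widetilde\varepsilon_0)/(2K)$. Chaining these three estimates produces
$$(\rho_\varepsilon G^* + \widetilde\varepsilon_0) K^{L_\varepsilon} < \frac{G^* - 2\widetilde\varepsilon_0}{2K} < \frac{G(\Gamma_S)}{2\max_{x \in \TT}\|DS(x)\|},$$
so every point $z$ in the full-measure set of the previous paragraph meets the hypothesis of Lemma~\ref{shadow}.

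Applying that lemma forces $z \in \Gamma_S$ for $\mu$-a.e. $z$, i.e., $\mu$ is supported on the finite periodic orbit $\Gamma_S$. Since $\mu$ is $S$-ergodic, this forces $\mu = \delta_{\Gamma_S}$, contradicting the standing assumption $\mu \neq \delta_{\Gamma_S}$, and completing the proof that $\mu(\mathcal{F}_1) > 0$. There is no real obstacle here, because the burden was carried by the earlier calibration of $\rho_\varepsilon$, $L_\varepsilon$, and $\widetilde\varepsilon_0$; the only point that deserves care is using $S$-invariance (rather than just ergodicity or the raw measure statement) to upgrade "typical point sits near $\Gamma_S$" to "typical forward orbit sits uniformly near $\Gamma_S$", which is exactly what allows the expansive-like lemma to fire.
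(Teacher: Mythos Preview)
Your proof is correct and uses the same core ingredients as the paper: Lemma~\ref{shadow}, Lemma~\ref{Lem:inner-distance-S}, and the chain of inequalities \eqref{f.uniform-neighborhood}--\eqref{f.choice-epsilon-0}. The only difference is packaging: the paper works directly with the generic point $x$ already in hand (which lies outside $\Gamma_S$), applies the contrapositive of Lemma~\ref{shadow} to produce an iterate $S^n(x)\in\mathcal{F}_1$, and concludes $\mu(\mathcal{F}_1)>0$ because $S^n(x)$ is still generic; you instead argue by contradiction via $S$-invariance to force $\operatorname{supp}\mu\subset\Gamma_S$.
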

\begin{proof}
By Lemma~\ref{shadow}, there is $n\in\NN$ such that
{\small $$d(S^n(x),\Gamma_S)\ge\frac{G(\Gamma_S)}{2\max_{z\in\TT}\|DS(z)\|}\overset{\eqref{f.uniform-neighborhood}}>\frac{G(\Gamma_S)}{2K}\overset{\textrm{Lemma}~\ref{Lem:inner-distance-S}}>\frac{G^*-2\widetilde\varepsilon_0}{2K}\overset{\eqref{f.choice-epsilon-0}}>(\rho_\varepsilon\cdot G^*+\widetilde \varepsilon_0)K^{L_\varepsilon}.$$}
In other words, $S^n(x)\in\cF_1$. Since $S^n(x)$ is also  a generic point of $\mu$, one has that $\mu(\cF_1)>0$.
\end{proof}


Put
$$\mathcal{N}=\{i\in\mathbb{N}\cup\{0\}:S^i(x)\in\mathcal{F}_1\}.$$
By the ergodicity of $\mu$, we have that
\begin{align}\label{f.ertodicity-density}\liminf_{N\to+\infty}\frac{\#(\mathcal{N}\cap[0,N-1])}{N}\ge \mu(\mathcal{F}_1).\end{align}
By the fact that ${\mathcal F}_1\subset{\mathcal F}_T$ and  the definition of $I$, one has that
\begin{align}\label{f.property-N}
\mathcal{N}\subset\{j_n:j_{n+1}-j_{n}=1,n\in\mathbb{N}\}.
\end{align}
Therefore, we have
\begin{align*}
\begin{split}
\int {\widetilde F}_Sd\mu=&\lim_{m\to+\infty}\frac{1}{j_{m+1}}\sum_{i=0}^{j_{m+1}-1}{\widetilde F}_S(S^i(x))\\
=&\lim_{m\to+\infty}\frac{1}{j_{m+1}}\sum_{n=1}^m \sum_{i=j_n}^{j_{n+1}-1}{\widetilde F}_S(S^i(x))\\
=&\lim_{m\to+\infty}\frac{1}{j_{m+1}}(\sum_{n=1}^m \sum_{S^{j_n}(x)\in {\mathcal F}_1,i=j_n}^{j_{n+1}-1}{\widetilde F}_S(S^i(x))+\sum_{n=1}^m \sum^{j_{n+1}-1}_{S^{j_n}(x)\notin {\mathcal F}_1,i=j_n}{\widetilde F}_S(S^i(x)))\\
\overset{{\eqref{f.index-positivy}}}\ge&\liminf_{m\to+\infty}\frac{1}{j_{m+1}}(\sum_{n=1}^m \sum_{S^{j_n}(x)\in {\mathcal F}_1,i=j_n}^{j_{n+1}-1}{\widetilde F}_S(S^i(x))\\
\overset{(\ref{f.property-N})}=& \liminf_{m\to+\infty} \frac{1}{j_{m+1}}\sum \limits_{n\in [1,m]:S^{j_n}(x)\in{\mathcal F}_1} {\widetilde F}_S(S^{j_n}(x))\\
\overset{(\ref{f.first-estimite-G})}\ge&  \liminf_{m\to+\infty} \frac{1}{j_{m+1}}\sum \limits_{n\in [1,m]:S^{j_n}(x)\in{\mathcal F}_1} \left(\frac{\varepsilon\cdot \rho_\varepsilon\cdot G^*}{K^4}-2\widetilde\varepsilon_0\cdot K-K\cdot d_*\right)\\
 \ge& \liminf_{m\to+\infty} \frac{|[0,j_{m+1}-1]\cap \mathcal{N}|}{j_{m+1}} \left(\frac{\varepsilon\cdot \rho_\varepsilon\cdot G^*}{K^4}-2\widetilde\varepsilon_0\cdot K-K\cdot d_*\right)\\
\overset{\eqref{f.ertodicity-density}}\ge &\mu(\mathcal{F}_1)\left(\frac{\varepsilon\cdot \rho_\varepsilon\cdot G^*}{K^4}-2\widetilde\varepsilon_0\cdot K-K\cdot d_*\right)
\overset{\eqref{f.measure-F_1}\eqref{f.first-estimite-G}}>0.
\end{split}
\end{align*}
Hence one can conclude.
\end{proof}

\subsection{Proof of Theorem~\ref{Thm:perturbative-smooth}} Let $T$ be a $C^2$ expanding self-map of $\TT$. Consider the $C^{1,1}$ map $h$ defined in the proof of Theorem~\ref{Thm:perturbative-extended}. For $\delta>0$ we let
 	$$h_\delta(x)=\frac{1}{2\delta}\int_{-\delta}^{\delta}h(x+s)ds.$$
One has that
$$Dh_\delta(x)=\frac{1}{2\delta}\int_{-\delta}^{\delta}Dh(x+s)ds.$$
By a simple calculation, one has that
\begin{Claim}
For any $x\in\TT$, $D^2h_\delta(x)=1/(2\delta)(Dh(x+\delta)-Dh(x-\delta))$.
\end{Claim}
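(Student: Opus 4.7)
The plan is to reduce the claim to a single application of the fundamental theorem of calculus, combined with a change of variables. Starting from the authors' expression
$$Dh_\delta(x) = \frac{1}{2\delta}\int_{-\delta}^{\delta} Dh(x+s)\,ds,$$
I would substitute $u = x+s$ (so $ds = du$, and the bounds become $x-\delta$ and $x+\delta$) to rewrite this as
$$Dh_\delta(x) = \frac{1}{2\delta}\int_{x-\delta}^{x+\delta} Dh(u)\,du.$$

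Since $h \in C^{1,1}(\TT)$, the derivative $Dh$ is Lipschitz, hence continuous on $\TT$. The Leibniz rule for an integral with both endpoints depending linearly on $x$ and a continuous integrand then applies, and differentiating in $x$ gives
$$D^2 h_\delta(x) = \frac{1}{2\delta}\bigl(Dh(x+\delta) - Dh(x-\delta)\bigr),$$
which is exactly the claimed formula. There is essentially no obstacle here; the only subtlety worth flagging is that even though $D^2 h$ itself only exists almost everywhere (because $Dh$ is merely Lipschitz), the right-hand side of the displayed formula involves only the continuous function $Dh$, so it makes $D^2 h_\delta(x)$ pointwise well-defined for \emph{every} $x\in\TT$. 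This is presumably why the authors isolate this identity as the bridge from $h\in C^{1,1}$ to $h_\delta\in C^2$ in the smoothing step, since it gives an explicit pointwise formula for the second derivative of the mollification without ever needing to invoke $D^2 h$.
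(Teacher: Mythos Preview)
Your argument is correct and is exactly the ``simple calculation'' the paper alludes to but does not spell out: the paper states the claim without proof, and your change of variables plus the fundamental theorem of calculus (applicable since $Dh$ is continuous) is the natural justification.
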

By  Theorem~\ref{Thm:perturbative-extended}, $\|D^2h_\delta\|_{C^0}\le {\rm Lip}(Dh)<\varepsilon/2$. Clearly, $\|h_\delta\|_{C^1}<\varepsilon/2$. Thus $S_\delta=T+h_\delta$ is an $\varepsilon/2$-perturbation of $T$ for $\delta$ small enough in the $C^2$ topology.

Any $C^2$-small perturbation of $S_\delta$ is contained in a neighborhood of $S_0$ in the $C^{1,1}$-neighborhood. Thus by Theorem~\ref{Thm:perturbative-extended}, for any $S$ sufficiently close to $S_\delta$, the Lyapunov minimizing measure of $S$ is supported on $\Gamma_S$. Hence the proof of Theorem~\ref{Thm:perturbative-smooth} is complete.\qed

\noindent Wen Huang\\

\noindent CAS Wu Wen-Tsun Key Laboratory of Mathematics 

\noindent School of Mathematical Sciences

\noindent University of Science and Technology of China, Hefei, Anhui,
230026, PR China

\noindent wenh@mail.ustc.edu.cn

\bigskip

\noindent Leiye Xu\\

\noindent CAS Wu Wen-Tsun Key Laboratory of Mathematics

\noindent School of Mathematical Sciences

\noindent University of Science and Technology of China, Hefei, Anhui,
230026, PR China

\noindent leoasa@mail.ustc.edu.cn

\bigskip

\noindent Dawei Yang

\noindent School of Mathematical Sciences

\noindent Soochow University, Suzhou, 215006, P.R. China

\noindent yangdw1981@gmail.com, yangdw@suda.edu.cn

%
%
%
%

	\end{document}